\newtheorem{thm}{Theorem}
\newtheorem{de}{Definition}
\newtheorem{rem}{Remark}
\newtheorem{conj}{Conjecture}
\newtheorem{for}{Formulation}
\newtheorem{st}{Statement}
\newtheorem{lem}{Lemma}
\newtheorem{prop}{Proposition}
\newtheorem{cor}{Corollary}
\newcommand{\ov}{\overline}
\title{Some examples toward a Manin-Mumford conjecture for abelian uniformizable $T-$modules}
\author{Luca Demangos\\Former address: Laboratoire Paul Painlevé, USTL,\\Batiment M2 Cité Scientifique,\\59655 Villeneuve d'Ascq Cédex\\Current address: Instituto de Matem\'{a}ticas -- Unidad Cuernavaca,\\Universidad
Nacional Autonoma de M\'{e}xico,\\Av. Universidad S/N, C.P. 62210\\Cuernavaca, Morelos, M\'{E}XICO\\e-mail: l.demangos@gmail.com}
\begin{document}
\maketitle

The aim of this work is to present a possible adaptation of the Manin-Mumford conjecture to the $T-$modules, a mathematical object which has been introduced in the 1980's by G. Anderson as the natural analogue of the abelian varieties in the context of modules over rings which are contained in positive characteristic function fields. We propose then a generalisation of such an adapted conjecture to a modified general version of Mordell-Lang conjecture for $T-$modules which might correct the one proposed for the first time by L. Denis in \cite{Denis'} but no longer compatible with the present results.\\\\We will remind in our first preliminary section the formulation of the Mordell-Lang and the Manin-Mumford conjectures and the definition of $T-$modules and sub-$T-$modules, listing nothing more than the basic definitions and properties which are strictly essential to us in order to state and prove our theorems. All the detailed informations the reader may be interested in might be found in \cite{Goss}, chapter 5. We then trace a brief history of the recent studies, mainly due to the work of L. Denis, D. Ghioca and T. Scanlon, about an adaptation of such conjectures to a very special case of $T-$module: the power of a Drinfeld module. In the second section we will present some examples which prove that a naif adaptation of the Manin-Mumford conjecture to the $T-$modules in general is no longer true, even extending the notion of sub-$T-$module in a way which encode the deeper structure of the involved rings. In one of these examples we will take the case of a product of different Drinfeld modules, showing that such an intuitive adaptation of Manin-Mumford conjecture is no longer true even in this appearently simple case of study. We will then propose a reasonably corrected formulation, whose proof is the final aim of our research. %Our third and last section will be finally devoted to describe our strategy to prove this new conjecture.
 % Any advance in such a recently born topic, as well as an appropriate version of Manin-Mumford conjecture, might contain in principle relevant informations.
\section{Preliminaries}
G. Faltings proved the Mordell-Lang conjecture (see \cite{F}) in the following version.
\begin{thm}
Let $\mathcal{A}$ be an abelian variety defined over a number field. Let $X$ be a closed subvariety of $\mathcal{A}$ and $\Gamma\subset \mathcal{A}$ a finitely generated subgroup of the group of $\mathbb{C}-$points on $\mathcal{A}$. Then $X\cap \Gamma$ is a finite union of cosets of subgroups of $\Gamma$.
\end{thm}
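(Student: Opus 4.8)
The plan is to follow the Vojta--Faltings strategy, which splits into a geometric reduction, an arithmetic reduction of the base field, and a core finiteness estimate. First I would reduce to the case where $X$ contains no translate of a positive-dimensional abelian subvariety. To this end set $B=\mathrm{Stab}^0(X)$, the connected component of the stabilizer $\{a\in\mathcal A : a+X=X\}$; this $B$ is an abelian subvariety, and the image $\ov{X}$ of $X$ in the quotient $\mathcal A/B$ contains no nonzero translated abelian subvariety. Since $B$ stabilizes $X$ one has $\pi^{-1}(\ov X)=X$ for the projection $\pi$, so if $\ov X\cap\ov\Gamma$ is finite (where $\ov\Gamma$ is the image of $\Gamma$), then each of these finitely many points has fibre meeting $\Gamma$ in at most a single coset of the subgroup $B\cap\Gamma$; their union is exactly $X\cap\Gamma$, giving the desired structure. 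Thus it suffices to prove: \emph{if $X$ contains no translate of a nonzero abelian subvariety, then $X\cap\Gamma$ is finite.}

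Second, I would pass from $\mathbb C$-points to a number field. The data $\mathcal A$, $X$ and a finite generating set of $\Gamma$ are all defined over a field $K$ finitely generated over $\mathbb Q$. Spreading $K$ out over a variety $S$ and invoking N\'eron's specialization theorem---which, for a well-chosen closed point $s\in S$, gives an injective specialization homomorphism on the finitely generated group $\Gamma$ while preserving the no-translated-subvariety hypothesis on the special fibre---reduces the problem to $K$ a number field, where the Northcott property is available.

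Third comes the arithmetic core. Let $\hat h$ denote the N\'eron--Tate canonical height attached to a symmetric ample line bundle; it descends to a positive-definite quadratic form on the finite-dimensional real vector space $V=(\Gamma/\Gamma_{\mathrm{tors}})\otimes_{\mathbb Z}\mathbb R$, with associated inner product $\langle\,\cdot\,,\cdot\,\rangle$ and norm $|\,\cdot\,|$. The engine is \textbf{Vojta's inequality}: there exist constants $c_1,c_2>0$, depending on $\mathcal A$, $X$ and the chosen embedding, such that any two points $P,Q\in X\cap\Gamma$ with $|P|\ge c_1$ and $|Q|\ge c_2\,|P|$ satisfy $\langle P,Q\rangle\le\tfrac34\,|P|\,|Q|$. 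Combined with \textbf{Mumford's gap principle} (which treats pairs of comparable height), this forces any two points of $X\cap\Gamma$ of sufficiently large height to make an angle bounded away from zero in $V$. As the unit sphere of the finite-dimensional space $V$ is compact, it is covered by finitely many caps of small angular radius, each of which can contain only finitely many points of $X\cap\Gamma$; hence $X\cap\Gamma$ has bounded N\'eron--Tate height. By Northcott's theorem over the number field $K$, a set of bounded height and bounded degree is finite, which gives the claim.

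The main obstacle is, unquestionably, Vojta's inequality, and establishing it is where essentially all the depth lies. I would prove it by the arithmetic method: on a product such as $X\times X$ one constructs, by a Siegel-lemma/pigeonhole argument, an auxiliary global section of a suitable line bundle having controlled Arakelov-theoretic height and small index along the subscheme cut out by $P$ and $Q$; one then bounds that index from above using the geometry of $X$ and from below using arithmetic (a nonzero section cannot vanish to too high an order at a point of small height relative to $|Q|/|P|$). The incompatibility of these bounds, made precise through \textbf{Faltings' product theorem}---the higher-dimensional replacement for Dyson's and Roth's lemmas that controls the index on products---yields the inequality. Carrying out the index and height estimates in Arakelov intersection theory, and verifying that the no-translated-subvariety hypothesis is precisely what makes the product theorem applicable, is the technically hardest and longest part of the program.
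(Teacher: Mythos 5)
You should first note that the paper offers no proof of this statement at all: it is Faltings' theorem on the Mordell--Lang conjecture, quoted verbatim from \cite{F}, so there is no internal argument to compare against. Your sketch follows the genuine Vojta--Faltings strategy (specialization to a number field, Vojta's inequality combined with Mumford's gap principle and Northcott's theorem, with the product theorem powering the auxiliary-section construction), and at that level of resolution the arithmetic core is described correctly --- granting, of course, that naming Vojta's inequality and the product theorem is not the same as proving them; essentially all of the content of the theorem is concentrated there.

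There is, however, a genuine gap in your first reduction. Quotienting by $B=\mathrm{Stab}^{0}(X)$ guarantees that the image $\ov{X}\subset\mathcal{A}/B$ has trivial connected stabilizer, but it does \emph{not} guarantee that $\ov{X}$ contains no translate of a positive-dimensional abelian subvariety: a subvariety can contain such a translate without being stabilized by the corresponding abelian subvariety. For instance, if $C$ is a non-hyperelliptic, bielliptic curve of genus $4$ with bielliptic quotient $E$, then $W_{2}=C+C\subset J(C)$ has trivial stabilizer yet contains a translate $E+a$ of $E$; choosing $\Gamma$ to contain $a$ and an infinite subgroup of $E$ makes $\ov{X}\cap\ov{\Gamma}$ infinite, so your reduction stalls and the claimed core statement is not what the general case reduces to. The correct reduction goes through the Kawamata--Ueno structure theorem: the special locus $Z(X)$, namely the union of all positive-dimensional translated abelian subvarieties contained in $X$, is Zariski closed; one proves finiteness of $(X\setminus Z(X))\cap\Gamma$ and then handles $Z(X)$ by Noetherian induction on dimension, applying the stabilizer quotient only stratum by stratum. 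A second, smaller point: for $\dim X>1$ Faltings' argument requires auxiliary sections on a product $X^{n}$ with many factors, not just $X\times X$; the two-factor version suffices only for curves.
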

Such a result contains clear similarities with the Manin-Mumford conjecture. The classic version of this conjecture\footnote{Historically, the first version of the Manin-Mumford conjecture has been proved by M. Laurent over $\mathbb{G}_{m}^{n}$, see \cite{Lau2}}, which treats the situation of an elliptic curve embedded in its Jacobian variety has been proved (see \cite{R}) and later generalized (see \cite{R2}) to the following statement.\\\\
We call \textbf{torsion subvariety} of an abelian variety $\mathcal{A}$ the translate of some abelian subvariety of $\mathcal{A}$ by a torsion point.
\begin{thm}
Let $X$ be an algebraic subvariety of an abelian variety $\mathcal{A}$ defined over a number field. If $X$ contains a Zariski-dense set of torsion points, then $X$ is a torsion subvariety of $\mathcal{A}$. 
\end{thm}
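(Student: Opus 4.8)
The plan is to prove the statement by the strategy of Pila and Zannier, converting the arithmetic question into a point-counting estimate for definable sets in an o-minimal structure. First I would carry out the standard reductions. Since the torsion points are Zariski-dense in $X$ and $X$ has only finitely many irreducible components $X_i$, each component is contained in the union of the closures of the torsion carried by the various components, and irreducibility then forces each $X_i$ to equal the closure of the torsion it contains; hence I may treat the components separately and assume $X$ is irreducible. Choosing a torsion point $x_0\in X$ and translating by $-x_0$ (a torsion translation does not affect the conclusion), I reduce to the case $0\in X$, so the goal becomes to show that $X$ is an abelian subvariety of $\mathcal{A}$. Replacing $\mathcal{A}$ by the abelian subvariety generated by $X$ and arguing by induction on $\dim\mathcal{A}$, it suffices to produce a single positive-dimensional abelian subvariety $B\subseteq\mathcal{A}$ such that a torsion translate of $B$ lies in $X$: quotienting by $B$ lowers the dimension and lets the induction close. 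All of this takes place over a fixed number field $K$ over which $X$ and $\mathcal{A}$ are defined.

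The analytic input comes from the uniformization $\mathcal{A}(\mathbb{C})\cong\mathbb{C}^g/\Lambda$, under which the real torus $\mathbb{R}^{2g}/\mathbb{Z}^{2g}$ parametrizes $\mathcal{A}(\mathbb{C})$ and the torsion points correspond exactly to the rational points $\mathbb{Q}^{2g}/\mathbb{Z}^{2g}$; a torsion point of exact order $n$ becomes a rational point of denominator $n$, whose height is comparable to $n$. Using theta functions one checks that the preimage $Z$ of $X$ in a fundamental domain $[0,1)^{2g}$ is definable in the o-minimal structure $\mathbb{R}_{\mathrm{an},\exp}$, so that torsion points on $X$ correspond to rational points lying on the definable set $Z$. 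The arithmetic input is a lower bound for Galois orbits: because $X$ is defined over $K$, every Galois conjugate of a torsion point $x\in X$ again lies in $X$, and by the theorems of Serre and Masser on the degree of torsion fields one has $[K(x):K]\gg n^{\delta}$ for some $\delta>0$ when $x$ has order $n$. Hence $X$ contains $\gg n^{\delta}$ rational points of denominator $n$, that is, of height $\ll n$, on $Z$.

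The heart of the argument is then to play this lower bound against the Pila--Wilkie counting theorem, which asserts that outside its algebraic part the definable set $Z$ carries at most $C_{\varepsilon}T^{\varepsilon}$ rational points of height $\le T$. Taking $T\asymp n$ and $\varepsilon<\delta$, for all large $n$ the $\gg n^{\delta}$ torsion conjugates cannot all lie outside the algebraic part, so $Z$ must contain positive-dimensional connected semialgebraic blocks passing through infinitely many of these points. The remaining, and genuinely hard, step is an Ax--Lindemann--Weierstrass theorem for $\mathcal{A}$: one must show that such semialgebraic blocks in $Z$ are forced to be cosets of linear subspaces of $\mathbb{C}^g$ that descend to honest abelian subvarieties, thereby yielding the desired torsion translate of a positive-dimensional $B\subseteq X$. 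I expect this functional-transcendence step, together with the uniform Galois lower bound on $[K(x):K]$, to be the main obstacles: the counting comparison is formal once both are in place, whereas Ax--Lindemann for abelian varieties requires serious monodromy and o-minimality arguments, and the Galois bound ultimately rests on open-image and homothety results for the adelic representation on $\mathcal{A}_{\mathrm{tors}}$.
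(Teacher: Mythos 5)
The paper does not actually prove this theorem: it is Raynaud's theorem, quoted with references \cite{R} and \cite{R2}, and Raynaud's own argument is arithmetic (reduction modulo $p^{2}$ and the action of a lift of Frobenius on torsion), so your o-minimal route is genuinely different from the cited proof. Most of your outline is the standard Pila--Zannier strategy and is sound: definability of the uniformization restricted to a fundamental domain (here $\mathbb{R}_{\mathrm{an}}$ already suffices, since no exponential is needed on a compact domain), Masser's lower bound $[K(x):K]\gg n^{\delta}$ for a torsion point of exact order $n$, Pila--Wilkie, and the analysis of the algebraic part. Two of the steps you single out as the main obstacles are in fact lighter than you suggest in this setting: the Galois input is Masser's unconditional polynomial bound rather than open-image or adelic homothety theorems, and Ax--Lindemann for an abelian variety reduces to a comparatively elementary argument (or to Ax's classical theorem) showing that the real subtorus generated by a positive-dimensional semialgebraic block in $Z$ descends to an abelian subvariety; no monodromy machinery is needed here, as that is required for Shimura varieties rather than for abelian varieties.

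The genuine gap is in your induction. Producing a single positive-dimensional abelian subvariety $B$ with a torsion translate inside $X$ and ``quotienting by $B$'' does not close the induction: if $B$ does not stabilize $X$, the image $\pi(X)\subseteq\mathcal{A}/B$ may have the same dimension as $X$ (it may even be all of $\mathcal{A}/B$), and knowing by induction that $\pi(X)$ is a torsion coset only yields $X\subseteq\pi^{-1}(\pi(X))$, a torsion coset \emph{containing} $X$, not equal to it. The correct bookkeeping is the stabilizer dichotomy: let $B=\mathrm{Stab}(X)^{0}$; if $\dim B>0$ then $X=\pi^{-1}(\pi(X))$ and induction applies cleanly to $\pi(X)$; if $B$ is finite one needs the additional geometric fact (the Ueno--Kawamata locus, or Bogomolov's theorem) that the union of all positive-dimensional cosets of abelian subvarieties contained in $X$ is a \emph{proper} Zariski-closed subset $X^{0}\subsetneq X$. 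Combined with the weak statement that your counting argument actually delivers --- only finitely many torsion points of $X$ lie outside $X^{0}$ --- the Zariski-density of torsion then forces $\dim X=0$, so $X$ is a torsion point. Without the stabilizer dichotomy and the closedness and properness of $X^{0}$, the deduction of the full theorem from the point-counting step does not go through.
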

An even stronger evidence of the link existing between these two results is given by the following extended formulation of G. Faltings' result, proved by M. Hindry and G. Faltings (see \cite{H}):
\begin{thm}
Let $X$ be a closed subvariety of an abelian variety $\mathcal{A}$ defined over a number field. Let $\Gamma$ be a finitely generated subgroup of $\mathcal{A}(\ov{\mathbb{Q}})$ and let:\[\ov{\Gamma}:=\{\ov{x}\in \mathcal{A}(\ov{\mathbb{Q}}),\exists m>0,[m]\ov{x}\in \Gamma\}.\]Therefore, $X(\ov{\mathbb{Q}})\cap \ov{\Gamma}$ is a finite union of cosets of subgroups of $\ov{\Gamma}$.
\end{thm}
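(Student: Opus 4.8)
The plan is to derive this statement by combining the finitely generated case (Theorem 1) with the Manin--Mumford statement (Theorem 2), the point being that $\ov{\Gamma}$ has finite rank and contains the full torsion subgroup $\mathcal{A}_{\mathrm{tors}}$. Concretely, writing $\Gamma_m:=\{x\in\mathcal{A}(\ov{\mathbb{Q}}):[m]x\in\Gamma\}$, the divisibility of $\mathcal{A}(\ov{\mathbb{Q}})$ gives an exact sequence $0\to\mathcal{A}[m]\to\Gamma_m\to\Gamma\to 0$, so each $\Gamma_m$ is finitely generated and $\ov{\Gamma}=\bigcup_m\Gamma_m$, the union being directed by divisibility; moreover $\ov{\Gamma}/\mathcal{A}_{\mathrm{tors}}$ is a $\mathbb{Q}$-vector space of dimension $r=\mathrm{rank}\,\Gamma$. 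First I would reduce to $X$ irreducible and observe that it suffices to prove that every irreducible subvariety $W\subseteq X$ for which $W\cap\ov{\Gamma}$ is Zariski-dense is a coset $\gamma+B$ with $\gamma\in\ov{\Gamma}$ and $B$ an abelian subvariety of $\mathcal{A}$; indeed $(\gamma+B)\cap\ov{\Gamma}=\gamma+(B\cap\ov{\Gamma})$ is then a coset of the subgroup $B\cap\ov{\Gamma}$, and finitely many such $W$ exhaust $X\cap\ov{\Gamma}$. Passing to $\mathcal{A}/\mathrm{Stab}(W)$, I may assume $\mathrm{Stab}(W)=0$ and aim to show $\dim W=0$.

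Next I would feed in the two known results: applying Theorem 1 to each finitely generated $\Gamma_m$ controls the intersections $X\cap\Gamma_m$ in the ``free'' directions, while Theorem 2 controls $W\cap\mathcal{A}_{\mathrm{tors}}$ in the torsion directions. The naive hope of simply taking the closures $\ov{X\cap\Gamma_m}$ and invoking Noetherianity \emph{fails}: these closed sets increase with $m$, and an increasing chain of closed subsets need not stabilise --- the case $\Gamma=0$, where $\Gamma_m=\mathcal{A}[m]$ and the torsion points may be Zariski-dense, shows the chain genuinely grows. This is exactly the gap that forces the use of Theorem 2 and of an argument uniform in $m$, and it is the reason the division-group statement is strictly deeper than Theorem 1.

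The gluing is carried out through the Galois action on division points, and this is the step I expect to be the main obstacle. Fix a number field $K$ over which $\mathcal{A}$, $X$ and a set of generators of $\Gamma$ are defined, so that $W$ is $K$-rational after enlarging $K$. For $x\in\ov{\Gamma}$ with $[m]x\in\Gamma\subseteq\mathcal{A}(K)$ and any $\sigma\in\mathrm{Gal}(\ov{\mathbb{Q}}/K)$ one has $[m]\bigl(\sigma(x)-x\bigr)=\sigma([m]x)-[m]x=0$, so $\sigma(x)-x\in\mathcal{A}[m]$: the Galois group moves $\ov{\Gamma}$ only by torsion, and since $\sigma(W)=W$ the entire orbit $\{\sigma(x)\}_{\sigma}$ lies in $W$. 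Invoking the Serre--Bogomolov theorem --- the image of $\mathrm{Gal}(\ov{\mathbb{Q}}/K)$ in $\mathrm{Aut}(T_\ell\mathcal{A})$ contains an open subgroup of homotheties --- produces, for each $x\in W\cap\ov{\Gamma}$, a large and controlled supply of conjugates $x+t$ with $t\in\mathcal{A}_{\mathrm{tors}}$ inside $W$. A rigidity argument, applying Theorem 2 to the resulting torsion translations preserving a Zariski-dense subset of $W$, then forces $\mathrm{Stab}(W)$ to be positive-dimensional as soon as $\dim W>0$; together with the reduction $\mathrm{Stab}(W)=0$ this yields $\dim W=0$, i.e. $W$ is a single point of $\ov{\Gamma}$. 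Tracking the finitely many components $W$ and their translating points $\gamma\in\ov{\Gamma}$ recovers the asserted decomposition of $X(\ov{\mathbb{Q}})\cap\ov{\Gamma}$ as a finite union of cosets of subgroups of $\ov{\Gamma}$. The delicate point throughout is the quantitative homothety input: extracting from Theorems 1 and 2 a statement uniform in $m$, which is precisely where the arithmetic of division points does the real work.
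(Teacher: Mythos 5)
The paper itself offers no proof of this statement: Theorem 3 is quoted as a known result of Hindry and Faltings with a pointer to \cite{H}, so there is nothing internal to compare your argument against except that citation. Your outline does, in fact, reproduce the skeleton of Hindry's published proof: reduce to the finitely generated case (Theorem 1) for the free part, use Manin--Mumford (Theorem 2) for the torsion part, and glue via the Galois action on division points together with the Serre--Bogomolov homothety theorem. Your preliminary reductions are sound: the exact sequence $0\to\mathcal{A}[m]\to\Gamma_m\to\Gamma\to 0$ does show each $\Gamma_m$ is finitely generated, the observation that $\sigma(x)-x\in\mathcal{A}[m]$ is correct, and your remark that the naive Noetherian limit over $m$ fails is exactly the right diagnosis of why the division-group statement is deeper.

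The genuine gap is the step you label ``a rigidity argument.'' For a fixed $x\in W\cap\ov{\Gamma}$ the Galois orbit is \emph{finite}, of size $[K(x):K]$, so what you actually obtain is finitely many torsion translates $x+t$ lying in $W$ --- and finitely many such translates impose no condition whatsoever on $\mathrm{Stab}(W)$, nor can Theorem 2 be applied to them (Theorem 2 concerns torsion points \emph{on} a subvariety, not torsion translations of a non-torsion point that happen to stay in $W$). To convert the homothety input into a contradiction with $\mathrm{Stab}(W)=0$ one needs a quantitative comparison: a lower bound for the degree $[K(x):K]$ as the order $m$ of $[m]x\in\Gamma$ grows (this is where Bogomolov's theorem and Kummer theory on $\mathcal{A}$ enter), played against an upper bound for the number of points of a bounded-degree Galois orbit that a variety with trivial stabilizer can contain --- in Hindry's paper this is the content of the ``lemme de l'homothétie'' and an induction on $\dim W$, using combinations such as $(\sigma-[\lambda])(x)$ to produce points of bounded torsion order in intersections of translates of $W$. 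As written, your argument asserts the conclusion of that lemma without supplying any mechanism for it, so the proof does not close; the rest of the sketch is correct in outline and consistent with the reference the paper relies on.
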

In fact, the Manin-Mumford conjecture (proved before such a result was known) follows if one chooses $\Gamma=\ov{0}$ in Theorem 3. On the other hand, the study of the Manin-Mumford conjecture still produces relevant ideas, which might be applied to different aspects of this kind of problems, as we hope to do.\\\\
The weak version of the M. Raynaud's theorem proved in \cite{PZ} using new ideas involving Diophantine Geometry is for example a consequence of Theorem 2:
\begin{thm}
Let $X$ be an algebraic subvariety of an abelian variety $\mathcal{A}$ defined over a number field. If $X$ does not contain any torsion subvariety of $\mathcal{A}$ of dimension $>0$, then $X$ contains at most finitely many torsion points.
\end{thm}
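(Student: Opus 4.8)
The plan is to deduce Theorem 4 directly from Theorem 2 by contraposition. I would assume that $X$ contains infinitely many torsion points and show that $X$ must then contain a torsion subvariety of $\mathcal{A}$ of dimension $>0$, which is precisely the negation of the hypothesis. Since $\mathcal{A}$ and $X$ are defined over a number field, every torsion point of $\mathcal{A}$ lies in $\mathcal{A}(\ov{\mathbb{Q}})$, so there is no loss in working with the full torsion subgroup.

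First I would form the set $T$ of all torsion points contained in $X$ and pass to its Zariski closure $Y:=\ov{T}$ inside $\mathcal{A}$. Because $X$ is closed, $Y\subseteq X$, and by construction $T$ is Zariski-dense in $Y$. The one point that requires care is that $Y$ need not be irreducible, whereas a torsion subvariety is irreducible; so I would decompose $Y=Y_1\cup\cdots\cup Y_r$ into its irreducible components and verify that the torsion points remain dense on each component. This follows from a standard argument: setting $Z_i:=\ov{T\cap Y_i}$ one has $\bigcup_i Z_i=\ov{T}=Y=\bigcup_i Y_i$ with $Z_i\subseteq Y_i$, and since an irreducible set contained in a finite union of closed sets lies in one of them, the incomparability of distinct components forces $Z_i=Y_i$ for every $i$. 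Hence each $Y_i$ is an irreducible subvariety of $\mathcal{A}$ containing a Zariski-dense set of torsion points.

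At this stage Theorem 2 applies to each $Y_i$ and yields that every $Y_i$ is a torsion subvariety of $\mathcal{A}$. It then remains to produce one of positive dimension. Since $T$ is infinite by assumption, $Y=\ov{T}$ contains infinitely many points and therefore cannot be zero-dimensional; consequently at least one component, say $Y_{i_0}$, satisfies $\dim Y_{i_0}\geq 1$. This $Y_{i_0}$ is a torsion subvariety of dimension $>0$ contained in $X$, contradicting the hypothesis of Theorem 4. Therefore $T$ is finite, which is the assertion to be proved.

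The argument is essentially a formal consequence of Theorem 2, so I expect no deep difficulty; the only genuine subtlety is the reduction to irreducible components, that is, ensuring that passing from the possibly reducible closure $Y$ to its components preserves the Zariski-density of torsion points, so that Theorem 2 can legitimately be invoked component by component. Once that bookkeeping is settled, the dimension count closes the argument immediately.
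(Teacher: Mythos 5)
Your argument is correct and follows exactly the route the paper intends: the paper offers no separate proof of Theorem 4, merely noting that it is a consequence of Theorem 2, and your deduction (Zariski closure of the torsion locus, reduction to irreducible components to preserve density, application of Theorem 2 componentwise, and the observation that an infinite closed set must have a positive-dimensional component) is precisely the standard way to fill in that implication. The bookkeeping on components and the dimension count are both handled correctly, so there is nothing to add.
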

The techniques involved in proving Theorem 4 appear to particularly adapt to the study of a specific class of objects, called $T-$modules, which are affine algebraic varieties over function fields provided with a module structure, which we briefly introduce now. Most of definitions and properties we list below are taken from \cite{Goss}, chapter 5. We send the reader to such a reference for more details.\\\\
%We now introduce the basic notions of function field arithmetic that we will need in our work.\\\\
%Nous proposons ici un premier pas en direction d'une preuve d'une version de la Conjecture de Manin-Mumford dans le cas des $T-$modules, suivant la stratégie adoptée dans le travail de J. Pila and U. Zannier \cite{PZ}.\\\\%On montrera comment il est possible d'adapter les passages fondamentaux de la preuve dans le cas des corps de fonctions au moins en ce qui concerne les estimations contenues dans le travail de J. Pila and J. Wilkie \cite{PW}, nous donnant une minoration à exposant arbitraire du nombre de points de torsion en fonction de son ordre.\\\\
We call $A:=\mathbb{F}_{q}[T]$ the ring of polynomials with coefficient in the finite field $\mathbb{F}_{q}$, where $q$ is a power of the prime number $p$, $k:=Frac(A)$ and $\mathcal{C}$ is the completion of an algebraic closure of the completion of $k$ with respect to the place at infinity. If $K$ is a field and $n,m$ are two positive integers, the notation $K^{n,m}$ will indicate the ring of matrices with entries in $K$, having $n$ lines and $m$ columns. We use $\tau$ to indicate the Frobenius automorphism in the following form:\[\tau: z\mapsto z^{q},\texttt{   }\forall z\in \mathcal{C}.\]%We consider now a generic variable $t$ which has nothing to see a priori with the $T$ we introduced before. We shall see from now on the ring $A$ as $\mathbb{F}_{q}[t]$ and $k$ as $\mathbb{F}_{q}(t)$. In other words, we shall take them up to isomorphism. We call an \textbf{$A-$field} a subfield $\mathcal{F}$ of $\mathcal{C}$ endowed with a particular ring homomorphism:\[i:A\to \mathcal{F};\]such that $i(t)$ is not a constant (in other words, not contained in any algebraic extension of $\mathbb{F}_{q}$ in $\mathcal{C}$). Intuitively, all the fields that we have defined before are $A-$fields with respect to the trivial embedding $i:t\mapsto T$. We may consider in general an $A-$field in such a trivial sense but we will keep in mind that the definitions, the constructions and the properties that we will explain remain valable in the same way even in the general sense. %We give the following definition, presented for the first time in the work of G. Anderson (\cite{A}).\\\\
\begin{de}
A $T$-module $\mathcal{A}=(\mathbb{G}_{a}^{m},\Phi)$ of degree $\widetilde{d}$ and dimension $m$ defined on the field $\mathcal{F}\subset \ov{k}$ is the algebraic group $\mathbb{G}_{a}^{m}$ having the structure of $A$-module given by the $\mathbb {F}_{q}$-algebras homomorphism:\[\Phi:A\to \mathcal{F}^{m,m}\{\tau\}\]\[T\mapsto\sum_{i=0}^{\widetilde{d}}a_{i}(T)\tau^{i};\]where $a_{0}$ (also called $d\Phi(T)$, the \textbf {differential} of $\Phi(T)$, which can be seen as a linear map acting on $\mathcal{C}^{m}$) is of the form:\[a_{0}=TI_{m}+N;\]where $N$ is a nilpotent matrix, and $a_{\widetilde{d}} \neq 0$. This shows moreover that $\Phi$ is injective, as in the case of a Drinfeld module (which is just a $T$-module having dimension $1$).
\end{de}
\begin{de}
A $T-$module of dimension $1$ is called a \textbf{Drinfeld module}. A particularly interesting example of a Drinfeld module is the typical degree $1$ case, called the \textbf{Carlitz module}:\[C=(\mathbb{G}_{a},\Phi);\]where:\[\Phi(T)(\tau):=T+\tau.\]
\end{de}
\begin{de}
The set of torsion points of the $T-$module $\mathcal{A}$ is:\[\mathcal{A}_{tors.}:=\{\ov{x}\in \mathcal{A}, \exists a(T)\in A\setminus\{0\},\Phi(a(T))(\ov{x})=\ov{0}\}.\]
\end{de}
\begin{de}
A sub-$T-$module $\mathcal{B}$ of a $T-$module $\mathcal{A}$ is a reduced connected algebraic subgroup of ($\mathcal{A},+$) such that $\Phi(T)(\mathcal{B})\subset \mathcal{B}$. 
\end{de}
We remark that a sub-$T-$module of a $T-$module is not in general a $T-$module. This can be seen in the following example.
\begin{prop}
Let us consider $q=2$. Let $\mathbb{D}_{1}=(\mathbb{G}_{a},\Phi_{1})$ and $\mathbb{D}_{2}=(\mathbb{G}_{a},\Phi_{2})$ be rank-1 Drinfeld modules such that:\[\Phi_{1}(T)(\tau)=T+T\tau\textit{   and   }\Phi_{2}(T)(\tau)=T+T^{2}\tau.\]Therefore, the algebraic subgroup:\[\mathcal{B}:=\{(x,y)\in \mathbb{G}_{a}^{2},y=x+x^{2}\};\]is a sub-$T-$module of $\mathbb{D}_{1}\times \mathbb{D}_{2}$.
\end{prop}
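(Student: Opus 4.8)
The plan is to verify directly the three requirements in Definition 4: that $\mathcal{B}$ is an algebraic subgroup of $(\mathbb{G}_a^2,+)$, that it is reduced and connected, and that it is stable under $\Phi(T)$. The guiding observation is that $\mathcal{B}$ is the graph of the map $h\colon\mathbb{G}_a\to\mathbb{G}_a$, $h(x)=x+x^2=x+\tau(x)$. Since $q=2$, the Frobenius $\tau$ is additive and $h=1+\tau\in\mathcal{F}\{\tau\}$ is an $\mathbb{F}_2$-linear, hence additive, polynomial; therefore $\psi\colon x\mapsto(x,h(x))$ is a homomorphism of algebraic groups and $\mathcal{B}=\psi(\mathbb{G}_a)$ is an algebraic subgroup. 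As the first projection restricts to an isomorphism $\mathcal{B}\cong\mathbb{G}_a$ with inverse $\psi$, the variety $\mathcal{B}$ is automatically smooth, irreducible, reduced and connected. This disposes of the first two requirements at once.

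The heart of the matter is the stability $\Phi(T)(\mathcal{B})\subset\mathcal{B}$, where $\Phi(T)$ acts diagonally by $\Phi(T)(x,y)=(\Phi_1(T)(x),\Phi_2(T)(y))=(Tx+Tx^2,\,Ty+T^2y^2)$. I would reformulate this as the intertwining identity $\Phi_2(T)\circ h=h\circ\Phi_1(T)$, expressing that $h$ is a morphism of Drinfeld modules $\mathbb{D}_1\to\mathbb{D}_2$: indeed $\Phi(T)$ sends $(x,h(x))$ to $(\Phi_1(T)(x),\Phi_2(T)(h(x)))$, which lies on $\mathcal{B}$ precisely when $\Phi_2(T)(h(x))=h(\Phi_1(T)(x))$. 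The cleanest verification is in the twisted polynomial ring $\mathcal{F}\{\tau\}$ using the commutation rule $\tau c=c^2\tau$: one computes $h\cdot\Phi_1(T)=(1+\tau)(T+T\tau)=T+(T+T^2)\tau+T^2\tau^2$ and $\Phi_2(T)\cdot h=(T+T^2\tau)(1+\tau)=T+(T+T^2)\tau+T^2\tau^2$, and the two agree. Equivalently, substituting $y=x+x^2$ and using $(a+b)^2=a^2+b^2$ in characteristic $2$, both $h(\Phi_1(T)(x))$ and $\Phi_2(T)(y)$ reduce to $Tx+Tx^2+T^2x^2+T^2x^4$, so the image point again satisfies $y'=x'+(x')^2$.

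I do not anticipate a genuine obstacle here: once the right framing is chosen, the proof is a short characteristic-$2$ computation. The only points demanding care are the bookkeeping, namely remembering that $\tau(x)=x^2$, that cross terms vanish so $h$ and the relevant squares remain additive, and that $\Phi(T)$ acts componentwise on the product, together with the quick algebraic-geometric check that the graph of an additive polynomial is a reduced connected subgroup. Conceptually, the example is exactly the graph of the isogeny $h=1+\tau\colon\mathbb{D}_1\to\mathbb{D}_2$, and it is this intertwining that makes $\mathcal{B}$ a sub-$T$-module while leaving room for it to fail to be a $T$-module in its own right, which is the phenomenon the surrounding remark is meant to illustrate.
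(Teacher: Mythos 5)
Your proof is correct and rests on the same computation as the paper's: substituting $y=x+x^{2}$ and checking in characteristic $2$ that $\Phi_{2}(T)(y)=\Phi_{1}(T)(x)+\Phi_{1}(T)(x)^{2}$, so the image point again lies on $\mathcal{B}$. Your framing of $\mathcal{B}$ as the graph of the isogeny $h=1+\tau\colon\mathbb{D}_{1}\to\mathbb{D}_{2}$, with the intertwining identity $\Phi_{2}(T)h=h\Phi_{1}(T)$ verified in $\mathcal{F}\{\tau\}$, is a pleasant conceptual gloss (and makes the reduced/connected check cleaner than the paper's bare assertion), but it is the same argument at heart.
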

\begin{proof}
We remark that $\mathcal{B}$ is reduced and connected. Now, for each $(x,y)\in \mathcal{B}$ we have that:\[\Phi_{2}(T)(y)=Ty+T^{2}y^{2}=T(x+x^{2})+T^{2}(x^{2}+x^{4})=\Phi_{1}(T)(x)+\Phi_{1}(T)(x)^{2};\]so that:\[(\Phi_{1}(T)(x),\Phi_{2}(T)(y))\in \mathcal{B}.\]This proves Proposition 1. As $\mathcal{B}$ is not a power of $\mathbb{G}_{a}$ it constitutes an example of a sub-$T-$module which is not a $T-$module.
\end{proof}
\begin{de}
Let $\mathcal{A}=(\mathbb{G}_{a}^{m},\Phi)$ be a $T-$module of dimension $m$. We say that it is \textbf{simple} if it does not admit any non-trivial sub-$T-$module (in other words, a sub-$T-$module different from $\mathcal{A}$ and $0$). 
\end{de}
\begin{de} 
Let $a(T)\in A\setminus \mathbb{F}_{q}$. We call a reduced, connected algebraic sub-group $\mathcal{B}$ of the $T-$module $\mathcal{A}=(\mathbb{G}_{a},\Phi)$ a \textbf{sub-$a(T)-$module} if $\Phi(a(T))(\mathcal{B})\subset \mathcal{B}$. %it is a sub-$\mathbb{F}_{q}[a(t)]-$module with rapport to the action of $\Phi$.
\end{de}
%We remark that every sub-$t-$module of $\mathcal{A}$ is a sub-$a(t)-$module as well, while the contrary is false. Indeed, the action of $\Phi(a(t))$ is completely different from the one of $\Phi(t)$ and does not induce it in general.\\\\
We call from now the \textbf{dimension} of a sub-$T-$module $\mathcal{B}$ of $\mathcal{A}$, the dimension of $\mathcal{B}$ as an algebraic variety over $\mathcal{C}$. We remark that the dimension of any non-trivial sub-$T-$module $\mathcal{B}<\mathcal{A}$ is strictly less than the dimension of $\mathcal{A}$.\\\\
Let $\mathcal{B}$ be any non-trivial sub-$T-$module of $\mathcal{A}$. We call for the moment \textbf{torsion set} a subset of $\mathcal{A}$ under the form:\[\ov{x}+\mathcal{B},\texttt{   }\ov{x}\in \mathcal{A}_{tors.}.\]We will extend later such a definition to the more general one of \textbf{torsion subvariety} (see Definition 11) in order to adapt the statement of Theorem 2 and Theorem 4 to the $T-$module context.
\begin{de}
By calling $\mathcal{F}\subset \ov{k}$ the field generated over $k$ by the entries of the coefficient matrices of $\Phi(T)$ (which are in $\ov{k}$), the \textbf{rank} of a $T-$module $\mathcal{A}$ is the rank over $\mathcal{F}[T]$ of the $\mathcal{F}[T]-$module $Hom_{\mathcal{F}}(\mathcal{A},\mathbb{G}_{a})$ (the $\mathbb{F}_{q}-$additive group homomorphisms from $\mathcal{A}$ to $\mathbb{G}_{a}$). We say that a $T-$module $\mathcal{A}$ is \textbf{abelian} if $Hom_{\mathcal{F}}(\mathcal{A},\mathbb{G}_{a})$ has finite rank.
\end{de}
An easy example of an abelian $T-$module is given by a product of Drinfeld modules. It is immediate to see indeed that the rank of a Drinfeld module coincides with its degree. By taking $\mathbb{D}_{1}, ..., \mathbb{D}_{r}$ finitely many Drinfeld modules of rank, respectively, $d_{1}, ..., d_{r}$ and defined over the field $\mathcal{F}\subset \ov{k}$ one sees that:\[Hom_{\mathcal{F}}(\mathbb{D}_{1}\times\cdots \times \mathbb{D}_{r},\mathbb{G}_{a})\simeq \bigoplus_{i=1}^{r}Hom_{\mathcal{F}}(\mathbb{D}_{i},\mathbb{G}_{a});\]so that the rank of such a product of Drinfeld modules is $\sum_{i=1}^{r}d_{i}$. This example shows also that the rank of a $T-$module is not in general the degree of this one. Indeed, the degree of $\mathbb{D}_{1}\times\cdots \times \mathbb{D}_{r}$ is $\max_{i=1, ..., r}\{d_{i}\}$.\\\\
We provide an example of a nonabelian $T-$module in Proposition 7.\\\\
One can prove (cfr. \cite{Goss}, Theorem 5.4.10) that if $\mathcal{A}$ is abelian, $Hom_{\mathcal{F}}(\mathcal{A}, \mathbb{G}_{a})$ is also free as a $\mathcal{F}[T]-$module.\\\\
Let $\mathcal{A}$ be a $T-$module of dimension $m$. We call $Lie(\mathcal{A})\simeq \mathcal{C}^{m}$ the tangent space of $\mathcal{A}$ at $\ov{0}$. %We have an analogous definition for a general $\mathcal{B}$ sub-$a(T)-$module of $\mathcal{A}$, for any possible $a(T)\in A\setminus \mathbb{F}_{q}$.
\begin{de}
Let $\mathcal{A}$ be an abelian $T-$module. The \textbf{exponential function} of $\mathcal{A}$ is the \textbf{unique}\footnote{See \cite{Goss}, Proposition 5.9.2.} morphism:\[\ov{e}:Lie(\mathcal{A})\to \mathcal{A};\]such that, for each $\ov{z}\in Lie(\mathcal{A})$, we have that:\[\ov{e}(d\Phi(T)\ov{z})=\Phi(T)(\ov{e}(\ov{z}));\]as described in \cite{Goss}, Definition 5.9.7. 
\end{de}
It is known (see \cite{Goss}, section 5) that such a morphism is $\mathbb{F}_{q}-$linear, a local homeomorphism and (see Definition 9) $\mathcal{F}-$entire too.\\\\If $\mathcal{A}$ is abelian and we write:\[\Lambda:=Ker(\ov{e});\]this kernel is an $A-$lattice inside $Lie(\mathcal{A})$ and its $A-$rank is less or equal the rank of $\mathcal{A}$, cfr. \cite{Goss}, Lemma 5.9.12.\\\\
The exponential map associated to $\mathcal{A}=(\mathbb{G}_{a}^{m},\Phi)$ projects therefore $\mathcal{C}^{m}$ in $\mathbb{G}_{a}^{m}$. %If we take $G$ a reduced connected subgroup of $\mathbb{G}_{a}^{m}$ of dimension $\delta$, its tangent space $T_{\ov{0}}(G)$ at $\ov{0}$ is a $\mathcal{C}-$vector subspace of $\mathcal{C}^{m}$, of dimension $\delta$.
\begin{prop}
Let $\mathcal{B}$ be a sub-$a(T)-$module of the abelian $T-$module $\mathcal{A}$ for a given $a(T)\in A\setminus \mathbb{F}_{q}$, and $Lie(\mathcal{B})$ its tangent space, contained as a $\mathcal{C}-$subspace into $Lie(\mathcal{A})$. Therefore:\[\ov{e}(Lie(\mathcal{B}))\subseteq \mathcal{B}.\]
\end{prop}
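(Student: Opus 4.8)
The plan is to combine the defining functional equation of the exponential with a linearization/contraction argument, reducing everything to a purely local statement near the origin. First I would record the facts I need about $\ov{e}$: it is $\mathbb{F}_{q}$-linear with identity linear part (so $d\ov{e}_{\ov{0}}=\mathrm{id}$), it is a local homeomorphism at $\ov{0}$, and the relation $\ov{e}(d\Phi(a)\ov{z})=\Phi(a)(\ov{e}(\ov{z}))$, stated for $a=T$, extends to every $a\in A$ by additivity and multiplicativity of $\ov{e},\Phi,d\Phi$. Next I would analyse the linear map $M:=d\Phi(a(T))$. Since $d\Phi$ is a ring homomorphism and $d\Phi(T)=TI_{m}+N$ with $N$ nilpotent, one has $M=a(TI_{m}+N)$, whose only eigenvalue is $a(T)$; as $a(T)\in A\setminus\mathbb{F}_{q}$ is non-constant, $a(T)\neq 0$ and its absolute value at the infinite place satisfies $|a(T)|>1$. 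Hence $M$ is invertible and expanding. Differentiating the inclusion $\Phi(a(T))(\mathcal{B})\subseteq\mathcal{B}$ at $\ov{0}$ shows $M(Lie(\mathcal{B}))\subseteq Lie(\mathcal{B})$, so $M$ restricts to an automorphism of $Lie(\mathcal{B})$ with single eigenvalue $a(T)$; in particular $M^{-n}\ov{w}\to\ov{0}$ for every $\ov{w}\in Lie(\mathcal{B})$.

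Set $V_{0}:=\ov{e}^{-1}(\mathcal{B})$. It is a closed subset of $Lie(\mathcal{A})$ stable under $M$: if $\ov{e}(\ov{z})\in\mathcal{B}$ then $\ov{e}(M\ov{z})=\Phi(a(T))(\ov{e}(\ov{z}))\in\mathcal{B}$ by $\Phi(a(T))$-invariance of $\mathcal{B}$. The key reduction is that it suffices to prove that $V_{0}$ contains a neighbourhood of $\ov{0}$ in $Lie(\mathcal{B})$. Granting this, let $\ov{z}\in Lie(\mathcal{B})$ be arbitrary; since $M^{-n}\ov{z}\to\ov{0}$, for $n$ large the point $M^{-n}\ov{z}$ lies in that neighbourhood, so $\ov{e}(M^{-n}\ov{z})\in\mathcal{B}$, and then the iterated functional equation gives $\ov{e}(\ov{z})=\ov{e}(M^{n}(M^{-n}\ov{z}))=\Phi(a(T))^{n}(\ov{e}(M^{-n}\ov{z}))\in\mathcal{B}$, because $\Phi(a(T))$ maps $\mathcal{B}$ into itself. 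Thus $\ov{e}(Lie(\mathcal{B}))\subseteq\mathcal{B}$.

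The remaining local statement is where I expect the real work to lie. Since $\mathcal{B}$ is reduced and $\mathcal{C}$ is algebraically closed, hence perfect, $\mathcal{B}$ is smooth at $\ov{0}$ with tangent space exactly $Lie(\mathcal{B})$. Because $\ov{e}$ is a local homeomorphism with invertible (identity) differential, it is a local analytic isomorphism at $\ov{0}$; writing $\log$ for its local inverse, $W:=\log(\mathcal{B})$ is an analytic germ at $\ov{0}$ of the same dimension as $\mathcal{B}$ and tangent to $Lie(\mathcal{B})$. I want to conclude that $W=Lie(\mathcal{B})$ as germs. As $d\Phi(a(T))$ is invertible, $\Phi(a(T))$ is itself a local analytic isomorphism at $\ov{0}$ with contracting local inverse $\psi$, and $\log$ conjugates $\psi$ to $M^{-1}$; moreover $\psi(\mathcal{B})\subseteq\mathcal{B}$ locally (the local inverse of the étale map $\Phi(a(T))|_{\mathcal{B}}$ must coincide with the restriction of $\psi$), so $W$ is invariant under the linear contraction $M^{-1}$.

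To exploit this, I would write $W$ as a graph $\{x+g(x):x\in Lie(\mathcal{B})\}$ over a complement $L'$, with $g(\ov{0})=\ov{0}$ and $dg_{\ov{0}}=0$. Invariance under $M^{-1}=\left(\begin{smallmatrix}A&B\\0&D\end{smallmatrix}\right)$ (the lower-left block vanishes precisely because $Lie(\mathcal{B})$ is $M^{-1}$-stable) translates into the relation $Dg(x)=g(Ax+Bg(x))$; comparing the lowest nonzero homogeneous term $g_{j}$ (with $j\geq 2$) yields $Dg_{j}=g_{j}\circ A$. Here $A$ and $D$ both have the single eigenvalue $a(T)^{-1}$, so the operator $\mathrm{Sym}^{j}A$ acting on the source has only the eigenvalue $a(T)^{-j}$ while $D$ has only $a(T)^{-1}$; as $|a(T)|>1$ these are distinct, the two spectra are disjoint, and a Sylvester-type vanishing forces $g_{j}=0$. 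Iterating gives $g\equiv 0$, i.e. $W=Lie(\mathcal{B})$ locally, which is exactly the neighbourhood statement used above. I expect the main obstacle to be this non-resonance step: it is what makes the local identification rigorous, and it is precisely where the hypothesis $a(T)\notin\mathbb{F}_{q}$ — guaranteeing $|a(T)|>1$, and thus both the expansion of $M$ and the absence of resonances — turns out to be indispensable.
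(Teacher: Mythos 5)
Your argument is correct, but it takes a genuinely different and far more self-contained route than the paper's. The paper disposes of the proposition in one line: setting $t:=a(T)$ it regards $\mathcal{B}$ as a sub-$t$-module of the $t$-module $\mathcal{A}$ and invokes Goss, Remark 5.9.8, i.e.\ the functoriality of the exponential (the inclusion $\iota:\mathcal{B}\hookrightarrow\mathcal{A}$ is a morphism of $t$-modules and $\ov{e}_{\mathcal{A}}\circ d\iota=\iota\circ \ov{e}_{\mathcal{B}}$ by uniqueness of the formal exponential), so that $\ov{e}(Lie(\mathcal{B}))=\ov{e}_{\mathcal{B}}(Lie(\mathcal{B}))\subseteq\mathcal{B}$. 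You never construct an exponential for $\mathcal{B}$: you first reduce to a germ statement at $\ov{0}$ using that $M=d\Phi(a(T))$ has the single eigenvalue $a(T)$ with $|a(T)|_{\infty}>1$ (so $M^{-n}\to 0$ and the functional equation propagates the local inclusion to all of $Lie(\mathcal{B})$), and then identify the germ of $\log(\mathcal{B})$ with $Lie(\mathcal{B})$ by a graph-plus-non-resonance (Sylvester) argument. The steps hold as you use them: the reduced group $\mathcal{B}$ is smooth over the perfect field $\mathcal{C}$, the non-archimedean inverse and implicit function theorems apply, the comparison of lowest homogeneous terms is characteristic-free, and the spectra $\{a(T)^{-1}\}$ and $\{a(T)^{-j}\}$ for $j\geq 2$ are disjoint precisely because $|a(T)|>1$. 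What your route buys is transparency about where the hypothesis $a(T)\notin\mathbb{F}_{q}$ enters (it supplies both the expansion and the absence of resonances) and independence from the extended notion of $t$-module needed to even define $\ov{e}_{\mathcal{B}}$; what the paper's route buys is brevity and reuse of the standard uniqueness statement for exponentials. In effect your linearization step is a direct proof of the local rigidity that underlies Goss's remark.
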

\begin{proof}
By calling:\[t:=a(T);\]and seeing $\mathcal{B}$ as a sub-$t-$module of the $t-$module $\mathcal{A}$, the statement follows directly from \cite{Goss} Remark 5.9.8.
\end{proof}
%We remark that the opposite inclusion does not hold a priori, unless we assume the stronger hypothesis on $\ov{e}$ to be surjective in order to repeat the same process in the inverse direction. Such a stronger hypothesis can not be assumed in general (see for example \cite{Goss} Example 5.9.9). We will discuss it in details later (see Definition 9) by putting in light all its importance.\\\\
As $\ov{e}(Lie(\mathcal{B}))\subseteq \mathcal{B}$ it follows that:\[Lie(\mathcal{B})\subseteq \ov{e}^{-1}(\mathcal{B}).\]%Furthermore, the torsion points of $\mathcal{B}$ with respect to its structure of sub-$a(T)-$module of $\mathcal{A}$ (in other words, the $a(T)-$torsion points of $\mathcal{A}$ which belong to $\mathcal{B}$) shall correspond via the exponential map $\ov{e}$ associated to $\mathcal{A}$ to the $\mathbb{F}_{q}(a(T))-$rational points of $Lie(\mathcal{B}

\begin{lem}
Let $\rho(\Lambda_{\mathcal{A}})$ be the $A-$rank of the lattice associated to $\mathcal{A}$ as the kernel of the exponential function $\ov{e}:Lie(\mathcal{A})\to \mathcal{A}$, and let $\rho(\mathcal{A})$ be the rank of $\mathcal{A}$. The following properties are equivalent for an abelian $T-$module $\mathcal{A}=(\mathbb{G}_{a}^{m},\Phi)$.
\begin{enumerate}
	\item $\rho(\Lambda_{\mathcal{A}})=\rho(\mathcal{A})$;
	\item The exponential function $\ov{e}:Lie(\mathcal{A})\to \mathcal{A}$ is surjective.
\end{enumerate}
\end{lem}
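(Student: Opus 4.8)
The plan is to build everything on the structural facts already recalled for the exponential: $\ov{e}$ is $\mathbb{F}_{q}$-linear, $\mathcal{F}$-entire and a local homeomorphism, its kernel is the $A$-lattice $\Lambda_{\mathcal{A}}$, and one always has $\rho(\Lambda_{\mathcal{A}})\leq\rho(\mathcal{A})$ by \cite{Goss}, Lemma 5.9.12. Under this a priori inequality, condition (1) is exactly the statement that $\Lambda_{\mathcal{A}}$ attains the maximal admissible $A$-rank, so the two directions will be organised around it. I would first fix the identification $Lie(\mathcal{A})\simeq\mathcal{C}^{m}\simeq\mathcal{A}$ and record two elementary properties of the image $U:=\ov{e}(Lie(\mathcal{A}))$. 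It is an $A$-submodule of $\mathcal{A}$, since the defining relation $\ov{e}(d\Phi(T)\ov{z})=\Phi(T)(\ov{e}(\ov{z}))$ gives $\Phi(T)(U)\subseteq U$; and it is open, hence also closed, because $\ov{e}$ is a local homeomorphism and therefore carries a neighbourhood of each $\ov{z}$ onto a neighbourhood of $\ov{e}(\ov{z})$. Thus surjectivity of $\ov{e}$ is equivalent to the vanishing of the discrete $A$-module $\mathcal{A}/U$.

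For the implication $(2)\Rightarrow(1)$ I would use the pullback pairing with the motive $M:=Hom_{\mathcal{F}}(\mathcal{A},\mathbb{G}_{a})$, which is free of finite $\mathcal{F}[T]$-rank $\rho(\mathcal{A})$ because $\mathcal{A}$ is abelian. Every $\phi\in M$ produces $\phi\circ\ov{e}$, an $\mathbb{F}_{q}$-linear $\mathcal{F}$-entire function on $Lie(\mathcal{A})$ that is $\Lambda_{\mathcal{A}}$-periodic (since $\Lambda_{\mathcal{A}}=Ker(\ov{e})$ and $\ov{e}$ is additive), and the intertwining relation above transports the $\mathcal{F}[T]$-action. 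If $\ov{e}$ is surjective then any $\phi$ with $\phi\circ\ov{e}=0$ vanishes on $\ov{e}(Lie(\mathcal{A}))=\mathcal{A}$, so $\phi=0$; hence $\phi\mapsto\phi\circ\ov{e}$ is an injection of $\mathcal{F}[T]$-modules from $M$ into the module of $\Lambda_{\mathcal{A}}$-periodic $\mathbb{F}_{q}$-linear entire functions, whose $\mathcal{F}[T]$-rank is bounded above by $\rho(\Lambda_{\mathcal{A}})$. This yields $\rho(\mathcal{A})\leq\rho(\Lambda_{\mathcal{A}})$, which together with \cite{Goss}, Lemma 5.9.12 forces the equality in (1).

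The implication $(1)\Rightarrow(2)$ is where I expect the genuine difficulty. Here one knows only that $U$ is an open and closed $A$-submodule of $\mathcal{A}\simeq\mathcal{C}^{m}$, and the total disconnectedness of $\mathcal{C}$ means this alone does \emph{not} force $U=\mathcal{A}$; the content is to show that maximality of the lattice rank makes the discrete quotient $\mathcal{A}/U$ vanish. The natural route is to run the pairing of the previous paragraph in reverse: a nonzero $\mathcal{A}/U$ would enlarge the module of periodic entire functions beyond what a lattice of $A$-rank $\rho(\Lambda_{\mathcal{A}})$ can account for, contradicting $\rho(\Lambda_{\mathcal{A}})=\rho(\mathcal{A})$. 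Making this quantitative is precisely the uniformization statement, and I would complete it by invoking the construction of the exponential attached to a full-rank lattice as developed in \cite{Goss}, section 5.9. The main obstacle is therefore the analytic surjectivity itself, namely producing a convergent preimage for every target point, rather than the formal rank bookkeeping; it is this step that relies on the full strength of Anderson's uniformization theory rather than on the soft topological properties of $\ov{e}$.
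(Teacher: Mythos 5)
The paper does not prove this lemma at all: its ``proof'' is the single line ``See \cite{Goss}, Theorem 5.9.14,'' so the statement is being imported wholesale from Anderson's uniformization theory as presented by Goss. Your write-up should therefore be judged as an attempted independent proof, and as such it has two genuine gaps. First, in the direction $(2)\Rightarrow(1)$ the decisive step is the assertion that the $\mathcal{F}[T]$-module of $\Lambda_{\mathcal{A}}$-periodic, $\mathbb{F}_{q}$-linear, entire functions on $Lie(\mathcal{A})$ has rank bounded above by $\rho(\Lambda_{\mathcal{A}})$. You give no argument for this, and it is not a soft fact: a priori that module of periodic functions is enormous, and identifying its $\mathcal{F}[T]$-rank with the $A$-rank of the lattice is essentially the content of Anderson's correspondence between lattices and $t$-motives, i.e.\ the very theorem being proved. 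Injectivity of $\phi\mapsto\phi\circ\ov{e}$ is fine, but without the rank bound on the target the inequality $\rho(\mathcal{A})\leq\rho(\Lambda_{\mathcal{A}})$ does not follow.

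Second, you explicitly concede that $(1)\Rightarrow(2)$ --- producing a preimage for every point when the lattice has full rank --- is ``the main obstacle'' and that you would ``complete it by invoking'' \cite{Goss}, section 5.9. That is not a proof sketch; it is the same citation the paper makes, minus the precise theorem number. Your preliminary observations (that $U=\ov{e}(Lie(\mathcal{A}))$ is an open, closed $A$-submodule, and your correct warning that openness alone does not give surjectivity over the totally disconnected field $\mathcal{C}$) are accurate framing but do not advance the argument. The honest conclusion is that both directions of your proposal reduce to the uniformization theorem you are trying to prove, so the proposal should either be replaced by the citation to \cite{Goss}, Theorem 5.9.14 (as the paper does), or expanded to actually reproduce Anderson's argument, in particular the comparison of $Hom_{\mathcal{F}}(\mathcal{A},\mathbb{G}_{a})$ with the lattice via the scattering/period pairing.
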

\begin{proof}
See \cite{Goss}, Theorem 5.9.14.
\end{proof}
\begin{de}
Let $\mathcal{A}=(\mathbb{G}_{a}^{m},\Phi)$ be an abelian $T-$module. If it respects the two equivalent conditions of Lemma 1 it is called \textbf{uniformizable}.
\end{de}
We remark that the proof of Lemma 1 in \cite{Goss}, Theorem 5.9.14 also applies to sub-$T-$modules of a given abelian $T-$module by the slightly extended notion of $T-$module considered in such a work. We can therefore easily extend Definition 9 to sub-$T-$modules too. More specifically, we say that a sub-$T-$module $\mathcal{B}$ of a given abelian $T-$module $\mathcal{A}$ is \textbf{uniformizable} if the induced exponential map:\[\ov{e}:Lie(\mathcal{B})\to \mathcal{B};\]is surjective.
\begin{rem}
The sub-$T-$modules of an abelian, uniformizable $T-$module are abelian and uniformizable.
\end{rem}
\begin{proof}
See \cite{D}, Remarque 2.1.8 and Remarque 2.1.19.
%Let $\mathcal{B}$ be a sub-$T-$module of some abelian and uniformizable $T-$module $\mathcal{A}$. We have already seen that $\ov{e}^{-1}(\mathcal{B})=Lie(\mathcal{B})$. As $\ov{e}$ is the exponential function defined on $Lie(\mathcal{A})$ (and not only on $Lie(\mathcal{B})$) if its restriction to $Lie(\mathcal{B})$ was not surjective, there would exist an element $\ov{x}\in \mathcal{B}$ such that $\ov{e}^{-1}(\ov{x})\cap Lie(\mathcal{B})=\emptyset$, when $\ov{e}^{-1}(\ov{x})\subset Lie(\mathcal{A})$. As $\ov{e}^{-1}(\ov{x})\subset \ov{e}^{-1}(\mathcal{B})=Lie(\mathcal{B})$ we easily get a contradiction.
\end{proof}
%By the discussion above one can naturally deduce that the notion of uniformizable can easily be extended to sub-$T-$modules too.\\\\
We also remark (see \cite{D}, Lemme 2.1.37) that if $\mathcal{A}$ is an abelian, uniformizable $T-$module of rank $d$, then $Lie(\mathcal{A})$ can be written as a direct sum of a $k_{\infty}-$vector space of dimension $d$, into which the associated lattice $\Lambda=Ker(\ov{e}_{\mathcal{A}})$ is cocompact (this is called the \textbf{torsion part} of $Lie(\mathcal{A})$), and a $k_{\infty}-$vector space of infinite dimension (the \textbf{free part} of $Lie(\mathcal{A})$). In other words, by calling $\{\ov{\omega}_{1}, ..., \ov{\omega}_{d}\}$ a generating set of periods of:\[\Lambda=<\ov{\omega}_{1}, ..., \ov{\omega}_{d}>_{A};\]we have that:\[Lie(\mathcal{A})\simeq (\bigoplus_{i=1}^{d}k_{\infty}\ov{\omega}_{i})\oplus Free_{k_{\infty}};\]where $Free_{k_{\infty}}$ is an infinite-dimensional $k_{\infty}-$vector space. Up to the automorphism $\phi$ of $Lie(\mathcal{A})$ which sends $\{\ov{\omega}_{1}, ..., \ov{\omega}_{d}\}$ to the canonical basis of $k_{\infty}^{d}$ and leaves unchanged the free part of $Lie(\mathcal{A})$, so that it moves $\Lambda$ to $A^{d}$, we have then that:\[Lie(\mathcal{A})\simeq (k_{\infty}/A)^{d}\oplus Free_{k_{\infty}};\]so that $\ov{e}$ put in bijection the set $\mathcal{A}_{tors.}$ of the torsion points of $\mathcal{A}$ with the set $(k/A)^{d}\times \ov{0}$.\\\\
By the discussion above it is now easy to see that the torsion points of $\mathcal{B}$ with respect to its structure of sub-$a(T)-$module of $\mathcal{A}$ (in other words, the $a(T)-$torsion points of $\mathcal{A}$ which belong to $\mathcal{B}$) shall correspond via the exponential map $\ov{e}$ associated to $\mathcal{A}$ to the $\mathbb{F}_{q}(a(T))-$rational points of $\phi(Lie(\mathcal{B}))\cap((k_{\infty}/A)^{d}\times \ov{0})$.\\\\
As one can rapidly check a $T-$module which is abelian and uniformizable can be interpreted therefore by analyzing its associated finite-rank lattice in an analogous fashion as for an abelian variety. The new techniques recently introduced by U. Zannier and J. Pila in \cite{PZ} which provided an elegant alternative proof of Theorem 4 involving Diophantine Geometry, are specifically based on such an interpretation and this is the reason why we focus on this particular class of $T-$modules.
\section{A new conjecture}
In this section we will study the possibility to adapt the Manin-Mumford conjecture to the $T-$modules as they have been previously introduced.\\\\
Results about a connection between the Mordell-Lang and the Manin-Mumford conjectures for $T-$modules have been worked out firstly by L. Denis (see \cite{Denis'}), who proposed the following unified conjecture. 
\begin{st}
Let $\mathcal{A}=(\mathbb{G}_{a}^{m},\Phi)$ be a $T-$module of rank $d$ and dimension $m>1$. Let $\Gamma$ be a finitely generated submodule of $\mathcal{A}(\ov{k})$ and let $X$ be a closed subvariety of $\mathbb{G}_{a}^{m}$. Let:\[\ov{\Gamma}:=\{\ov{x}\in \mathcal{A}(\ov{k}),\exists a(T)\in A\setminus\{0\},\Phi(a(T))(\ov{x})\in \Gamma\}.\]Therefore, there exist finitely many translates of sub-$T-$modules of $\mathcal{A}$ in the form $\ov{\gamma}_{1}+\mathcal{B}_{1}, ..., \ov{\gamma}_{s}+\mathcal{B}_{s}$ such that:\[X\cap \ov{\Gamma}=\bigcup_{1\leq i\leq s}(\ov{\gamma}_{i}+\mathcal{B}_{i}\cap \ov{\Gamma}).\]
\end{st}
As we have seen in the previous section, such a statement would imply an intuitive adaptation of Mordell-Lang and Manin-Mumford conjectures to the $T-$modules context. We will see anyway in the present section that Statement 1 is unfortunately false.\\\\
L. Denis proved on the other hand in the same paper, under some technical restriction on the hypotheses, an adapted formulation of the Manin-Mumford conjecture for finite powers of Drinfeld modules (see \cite{Denis'}, Théorème 1). 
%As we will highlight better later such a specific kind of $T-$module presents particularly strong properties which make it one of the best cases of study. Indeed, 
%We focus in particular on the special family of abelian and uniformizable $T-$modules. As one can easily remark, % We will then trace our proof strategy.\\\\
Such a result has been subsequently extended into a completely analogous formulation of Theorem 2 for finite powers of Drinfeld modules by T. Scanlon in \cite{Scanlon}, removing so the restriction on the hypotheses in L. Denis' result summarized above. T. Scanlon's result is the following one. %which are a power of some given Drinfeld module, basing such a proof on Model Theory, extending so a previous result of L. Denis (see \cite{Denis'}), where the same conclusion was proven on a power of a Drinfeld module, under an additional hypothesis on this one, which will be fully described later (see hypothesis 1). The Scanlon's result is the following one. 
\begin{thm}
Let $\mathcal{A}=\mathbb{D}^{m}:=(\mathbb{G}_{a}^{m},\Phi_{m})$ be a power of some given Drinfeld module $\mathbb{D}=(\mathbb{G}_{a},\Phi)$ defined over a field $\mathcal{F}$, so that $\Phi_{m}(T)=\Phi(T)I_{m}$ (in other words, $\Phi$ acts diagonally on $\mathbb{G}_{a}^{m}$). Let $X$ be an irreducible algebraic sub-variety of $\mathcal{A}$. If $X(\mathcal{F})_{tors.}$ is Zariski-dense in $X$, then $X$ is the translate of some sub-$T-$module of $\mathcal{A}$ by a torsion point.
\end{thm}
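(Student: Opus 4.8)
The plan is to transport the Pila--Zannier strategy to the non-archimedean uniformization set up above. First I would uniformize: since $\mathcal{A}=\mathbb{D}^{m}$ is a power of a Drinfeld module it is abelian and uniformizable, so by Lemma 1 the exponential $\ov{e}:Lie(\mathcal{A})\to \mathcal{A}$ is surjective, and by the discussion following it the torsion points $\mathcal{A}(\mathcal{F})_{tors.}$ correspond under $\phi\circ \ov{e}^{-1}$ exactly to the rational points $(k/A)^{d}\times \ov{0}$ inside the compact torsion part $(k_{\infty}/A)^{d}$ of $Lie(\mathcal{A})$. A torsion point annihilated by $a(T)\in A$ then lifts, in the period basis $\{\ov{\omega}_{1},\ldots,\ov{\omega}_{d}\}$, to a point with coordinates $b_{i}(T)/a(T)$, $b_{i}\in A$. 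The whole question thus becomes a statement about the $A$-rational points lying on the analytic set $Z:=\ov{e}^{-1}(X)$ meeting a fixed fundamental domain for $\Lambda$.

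Second, I would play two estimates against each other. On the arithmetic side, a Serre--Pink type openness theorem for the image of Galois in the automorphisms of the Tate module of $\mathbb{D}$ forces the Galois orbit of a torsion point of order $a(T)$ over $\mathcal{F}$ to have size bounded below by a fixed power of $|a(T)|=q^{\deg a}$; since all these conjugates again lie on $X$, the set $Z$ carries at least $q^{c\deg a}$ distinct $A$-rational points of height $\le \deg a$. On the geometric side I would want a Pila--Wilkie type upper bound asserting that the $A$-rational points of height $\le B$ lying on the genuinely transcendental part of $Z$ number only $O_{\epsilon}(q^{\epsilon B})$. For large $\deg a$ these are incompatible unless $Z$ contains positive-dimensional flat pieces, so one is forced to produce, through each such cluster, a translate of a $k_{\infty}$-rational linear subspace $V\subseteq Lie(\mathcal{A})$ contained in $\ov{e}^{-1}(X)$.

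Third, I would feed $V$ into an Ax--Lindemann--Weierstrass type rigidity statement for $\ov{e}$. A maximal linear subspace contained in $\ov{e}^{-1}(X)$ is defined over $k_{\infty}$ and, being forced to carry a Zariski-dense family of torsion cosets, is stable under $d\Phi(T)$; hence it is the Lie algebra of a sub-$T$-module $\mathcal{B}$, and Proposition 2 together with uniformizability gives $\ov{e}(V)=\mathcal{B}$. Running this over all clusters yields a finite union of torsion cosets $\ov{\gamma}_{j}+\mathcal{B}_{j}$ covering the dense torsion of $X$; the irreducibility of $X$ then collapses this to a single coset $X=\ov{x}+\mathcal{B}$ with $\ov{x}$ torsion, which is the asserted conclusion.

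The hard part is unquestionably the second step. Over $\mathbb{R}$ the Pila--Wilkie theorem rests on o-minimality, and there is no literal counterpart for the entire map $\ov{e}$ over the non-archimedean field $k_{\infty}$; proving the required sub-polynomial counting for $A$-rational points on the transcendental part of $\ov{e}^{-1}(X)$ is precisely the serious input. This is why Scanlon's actual argument in \cite{Scanlon} replaces the analytic counting by a model-theoretic dichotomy of Zilber type for a suitable expansion of a separably closed field, using the same openness of the Galois image only to guarantee that the torsion orbits are large. An alternative plan would therefore abandon uniformization counting and run the rigidity directly inside that framework, deducing from the big Galois image that the stabilizer of $X$ is a sub-$T$-module and hence that $X$ is one of its torsion translates.
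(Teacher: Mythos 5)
First, a point of reference: the paper does not prove this statement at all --- it is quoted verbatim as T. Scanlon's theorem from \cite{Scanlon}, so there is no internal proof to compare yours against. Judged on its own terms, your proposal is a programme rather than a proof, and the gap sits exactly where you yourself locate it, but it is worth being precise about how serious that gap is. Step two requires a Pila--Wilkie-type counting theorem for $A$-rational points on the ``transcendental part'' of the rigid-analytic set $\ov{e}^{-1}(X)\subseteq Lie(\mathcal{A})$ over $k_{\infty}$. No such theorem is available: Pila--Wilkie is a statement about sets definable in o-minimal expansions of the real field, and there is at present no tame non-archimedean framework in which the entire map $\ov{e}$ is definable and for which sub-polynomial counting on the transcendental locus has been established. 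Supplying that input is precisely the open technical heart of the research programme this paper announces (see the discussion following Conjecture 1), so it cannot be invoked as a known lemma. The arithmetic lower bound in the same step is also not free: the claim that a Galois orbit of an $a(T)$-torsion point has size at least a fixed positive power of $q^{\deg a}$ is an open-image statement in the spirit of Pink's theorem \cite{Pink}, valid under hypotheses on $End(\mathbb{D})$ and on the characteristic that you would need to state and use. Step three is on firmer ground for a power of a single Drinfeld module, because Thiery's theorem (Theorem 8 in the paper) identifies sub-$T$-modules of $\mathbb{D}^{m}$ with $End_{\mathcal{F}}(\Phi)$-rational subspaces of $Lie(\mathbb{D}^{m})$; but your assertion that a maximal linear subspace of $\ov{e}^{-1}(X)$ passing through a dense family of torsion points is automatically stable under $d\Phi(T)$ is an Ax--Lindemann-type rigidity claim that itself requires proof --- Proposition 2 only yields $\ov{e}(Lie(\mathcal{B}))\subseteq\mathcal{B}$ once a sub-module $\mathcal{B}$ is already in hand.

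As you correctly observe, the actual published proof proceeds by an entirely different, model-theoretic route in the style of Hrushovski's proof of Manin--Mumford, in which the largeness of Galois orbits substitutes for analytic point counting. The honest conclusion is that your outline is a reasonable conjectural transposition of the Pila--Zannier strategy to this setting --- essentially the one the author of the paper hopes to carry out for more general abelian uniformizable $T$-modules --- but, as it stands, it does not constitute a proof of the theorem.
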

T. Scanlon proved also an adapted version of the Mordell-Lang conjecture to powers of Drinfeld modules of finite characteristic\footnote{See \cite{Goss}, chapter 4.} (see \cite{Scanlon'}), analogous to Theorem 1:
\begin{thm}
Let $\mathbb{D}$ be a Drinfeld module of finite characteristic and modular trascendence degree\footnote{See the referred paper for the complete definition.} at least 1. Let $\Gamma$ be a finitely generated submodule of $\mathbb{D}^{m}(\ov{k})$ for some $m>1$ and let $X$ be a closed $\ov{k}-$subvariety of $\mathbb{G}_{a}^{m}$. Then $X(\ov{k})\cap \Gamma$ is a finite union of translates of subgroups of $\Gamma$.
\end{thm}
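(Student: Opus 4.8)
The plan is to follow the model-theoretic strategy that establishes Mordell--Lang in positive characteristic, adapting it to the Drinfeld setting. First I would carry out the standard reductions. Since $\Gamma$ is finitely generated it lies in $\mathbb{D}^{m}(L)$ for some finitely generated field extension $L/k$, and by Noetherian induction on $\dim X$ together with the familiar stabilizer argument, it suffices to prove the following special case: if $X$ is irreducible and $X(\ov{k})\cap\Gamma$ is Zariski-dense in $X$, then $X$ is a coset $\ov{\gamma}+H$ of an algebraic subgroup $H$ of $\mathbb{G}_{a}^{m}$ which is stable under the diagonal action $\Phi_{m}(T)$. Intersecting such a coset with $\Gamma$ then produces a translate of a subgroup of $\Gamma$, and reassembling the finitely many components recovers the full statement.

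Next I would set up the difference-field framework that makes the finite-characteristic hypothesis usable. Because $\mathbb{D}$ has finite characteristic there is a prime $\mathfrak{p}=(P(T))$ of $A$ lying in the kernel of the structure map, and over the reduction modulo $\mathfrak{p}$ the operator $\Phi(P)$ is purely inseparable, equal up to a unit to a power of the $q$-power Frobenius $\tau$. This identification lets me interpret the $A$-module $\mathbb{D}^{m}$, together with the subset $\Gamma$, inside a model of $\mathrm{ACFA}$ (an algebraically closed field equipped with the distinguished automorphism induced by $\tau$), in such a way that multiplication by elements of $A$ becomes definable and $\Gamma$ sits inside a definable module $G$.

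The core of the argument is then Hrushovski's dichotomy for groups definable in $\mathrm{ACFA}$, in the form adapted by Scanlon to Drinfeld modules: such a group is either non-orthogonal to the fixed field (the field-like, isotrivial case) or one-based (modular). Here the hypothesis that the modular transcendence degree of $\mathbb{D}$ is at least $1$ is exactly what excludes the isotrivial alternative, forcing $G$ to be modular. For one-based groups the modularity theorem guarantees that every definable subset of a Cartesian power is a finite Boolean combination of cosets of definable subgroups; applying this to the locus cut out by $X$ shows that $X(\ov{k})\cap\Gamma$ is a finite union of cosets of definable subgroups of $G$. Descending these definable subgroups back to honest $A$-submodules of $\Gamma$ then finishes the proof.

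The hard part is twofold. The genuinely deep ingredient is the dichotomy together with the verification that the modular transcendence degree condition really does rule out the field-like case --- this is the precise analogue of Hrushovski's theorem on semiabelian varieties in characteristic $p$, and it is where all the difficulty of the positive-characteristic Mordell--Lang phenomenon is concentrated. A secondary but nontrivial obstacle is the descent step: one must check that the definable subgroups produced by the modularity theorem correspond to genuine $A$-submodules (subgroups) of $\Gamma$ defined over $\ov{k}$, rather than merely to definable sets in the enriched difference-field language.
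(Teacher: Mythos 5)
The paper does not actually prove this statement: it is quoted verbatim as Scanlon's theorem, with the proof deferred entirely to the cited reference \cite{Scanlon'}. So there is no internal argument to compare yours against; what can be judged is whether your outline matches the known proof and whether it stands on its own. On the first count you have correctly identified the architecture of the literature argument --- reduce by Noetherian induction and a stabilizer argument to the case of a coset, place $\Gamma$ inside a definable (or type-definable) group in an enriched model-theoretic structure, invoke a Zilber-type dichotomy, use the modular transcendence degree hypothesis to kill the field-like alternative, and conclude via one-basedness that definable subsets are Boolean combinations of cosets. On the second count, your text is an outline rather than a proof: the two steps you yourself flag as ``the hard part'' (the dichotomy plus the verification that modular transcendence degree $\geq 1$ excludes non-orthogonality to the fixed field, and the descent from definable subgroups to genuine subgroups of $\Gamma$ over $\ov{k}$) are precisely where the entire content of the theorem lives, and they are named but not carried out.

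One concrete correction to the framework you chose: for the Mordell--Lang statement about a finitely generated submodule $\Gamma$, the natural model-theoretic home is the theory of separably closed fields, not $\mathrm{ACFA}$. The finite characteristic hypothesis gives a prime $P(T)$ with $\Phi(P)$ inseparable, and one embeds $\Gamma$ into the type-definable $P$-divisible hull $\bigcap_{n}\Phi(P^{n})\bigl(\mathbb{D}^{m}(L^{sep})\bigr)$ inside a separably closed field $L^{sep}$ of finite degree of imperfection; the dichotomy then applied is Hrushovski's for minimal types in that theory (this is also what the title of the cited Ghioca paper refers to). The difference-field/$\mathrm{ACFA}$ setting you describe is the one Scanlon used for the \emph{torsion-point} analogue (Theorem 5 of this paper), where the relevant set is cut out by a difference equation; transplanting it to the finitely generated $\Gamma$ of the present statement is not routine and is not how the cited proof proceeds. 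As written, then, your proposal is a reasonable roadmap with the wrong vehicle specified for the main leg of the journey, and with the genuinely difficult stretches left as pointers to theorems you would still need to state precisely and verify are applicable.
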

As we will highlight better later such a specific case of $T-$module (a finite power of a Drinfeld module) presents particularly strong properties which make it one of the best cases of study. A $T-$module in such a form is moreover easily abelian and uniformizable %As one can rapidly check a $T-$module which is abelian and uniformizable can be interpreted by analyzing its associated finite-rank lattice in an analogous fashion as for an abelian variety. The new techniques recently introduced by U. Zannier and J. Pila in \cite{PZ} which provided an elegant alternative proof of Theorem 2 involving Diophantine Geometry, are specifically based on such an interpretation and this is the reason for which we focus on this particular class of $T-$modules. 
and this would appear to be a first encouraging step for our project to prove a similar result, using the new techniques introduced in \cite{PZ} for an abelian and uniformizable $T-$module $\mathcal{A}$. We would like therefore to prove a weaker result, analogous to Theorem 4, but for a general abelian and uniformizable $T-$module, as in the following formulation. %extend our study to a more general case as in the following formulation. 
\begin{for}
Let $X$ be an algebraic subvariety of an abelian uniformizable $T-$module $\mathcal{A}$ defined over $\ov{k}$. If $X$ does not contain any torsion set of $\mathcal{A}$, then $X$ contains at most finitely many torsion points.
\end{for}
Such a statement turns out to be unfortunately false. This is an immediate consequence of the following proposition.
\begin{prop}
There exists an abelian uniformizable $T-$module $\mathcal{A}$ and a non-trivial algebraic subvariety of $\mathcal{A}$ which contains infinitely many torsion points but no torsion sets of $\mathcal{A}$.
\end{prop}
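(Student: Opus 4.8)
The plan is to construct an explicit counterexample: a $T$-module $\mathcal{A}$ that is abelian and uniformizable together with a subvariety $X$ carrying infinitely many torsion points while containing no torsion set $\overline{x}+\mathcal{B}$. The natural candidate builds on the earlier examples: take a product of two Drinfeld modules $\mathcal{A}=\mathbb{D}_1\times\mathbb{D}_2$ (automatically abelian and uniformizable, since each factor is), and search for a subvariety — most simply a curve $X=\{(x,y):y=f(x)\}$ for a suitable $\mathbb{F}_q$-linear or additive polynomial $f$ — that meets the torsion infinitely often without being a coset of a sub-$T$-module.

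Let me sketch the key steps in order. First I would fix a small base case, say $q=2$, and choose $\mathbb{D}_1,\mathbb{D}_2$ concretely (the pair appearing in Proposition 1, $\Phi_1(T)=T+T\tau$ and $\Phi_2(T)=T+T^2\tau$, is a promising reservoir since it already produces a sub-$T$-module that is not a $T$-module). Second, using the analytic uniformization described in the Remark after Lemma 1, I would translate the torsion condition into the lattice picture: via $\overline{e}=\overline{e}_1\times\overline{e}_2$, the torsion points of $\mathcal{A}$ correspond to $(k/A)^{d_1}\times(k/A)^{d_2}$ inside the torsion part of $Lie(\mathcal{A})$, so counting torsion points on $X$ becomes an arithmetic question about which pairs of torsion values $(\overline{e}_1(\alpha),\overline{e}_2(\beta))$ satisfy the defining equation of $X$. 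Third, I would exhibit an $X$ for which this arithmetic relation has infinitely many solutions — for instance by exploiting a polynomial identity linking $\Phi_1$- and $\Phi_2$-torsion of matching orders, so that infinitely many torsion points lie on $X$ "by accident" rather than structurally. Fourth, and crucially, I would verify that $X$ contains no torsion set: since the sub-$T$-modules $\mathcal{B}$ of a product of Drinfeld modules are constrained (their $Lie(\mathcal{B})$ are $\mathcal{C}$-subspaces stable under $d\Phi(T)$, hence assembled from the factors), one checks directly that no translate $\overline{x}+\mathcal{B}$ of positive dimension fits inside the chosen $X$, typically because $X$ is a curve whose tangent directions are incompatible with any $d\Phi(T)$-stable line, while the zero-dimensional case is excluded because a single torsion point is not an infinite family.

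The main obstacle I expect is the tension in Step 3 versus Step 4: I must make the torsion intersection genuinely \emph{infinite} yet keep $X$ \emph{structurally torsion-free}. The easy way to get infinitely many torsion points is to slip a sub-$T$-module coset into $X$, which is exactly what must be avoided; so the construction has to produce torsion points that accumulate on $X$ for a number-theoretic reason — a coincidence of torsion orders or a Frobenius-type relation between the two Drinfeld actions — rather than a group-theoretic one. Concretely, I anticipate that the defining equation should encode a relation such as $y=x^{q^n}$ holding for torsion $x$ of appropriate level, which is satisfied by infinitely many torsion points yet cuts out no $d\Phi(T)$-invariant subspace. Verifying the \textbf{no torsion set} condition rigorously — classifying all sub-$T$-modules of the chosen $\mathcal{A}$ and ruling out every translate — is where the genuine work lies, and it is precisely here that one uses the restrictive structure of $Lie(\mathcal{B})$ as a $d\Phi(T)$-stable subspace together with the correspondence between $\mathcal{B}$'s torsion and the $\mathbb{F}_q(a(T))$-rational points of $\phi(Lie(\mathcal{B}))$ noted just before Section 2.
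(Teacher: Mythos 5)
Your overall strategy -- a curve in a product of two Drinfeld modules meeting the torsion infinitely often ``by accident'' -- is viable in principle (it is essentially what the paper does later, in Proposition 5, with the Carlitz module paired with its square-root twist $C_{(2)}(T)(\sqrt{z})=\sqrt{C(T^{2})(z)}$). But as written the proposal has a genuine gap: the two steps you yourself identify as the crux (producing the infinite torsion intersection, and ruling out torsion sets) are only announced, not carried out, and your one concrete suggestion for the input data would make them fail. The pair $\Phi_{1}(T)=T+T\tau$, $\Phi_{2}(T)=T+T^{2}\tau$ from Proposition 1 consists of two rank-$1$ Drinfeld modules joined by the explicit isogeny $x\mapsto x+x^{2}$; its graph is a one-dimensional sub-$T$-module, hence a torsion set, and for such a product (same rank, invertible leading coefficient matrix $\mathrm{diag}(T,T^{2})$) a Scanlon-type Manin--Mumford statement is expected to hold, so \emph{every} curve with infinitely many torsion points there should be a coset of a sub-$T$-module. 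In other words, the ``tension between Step 3 and Step 4'' you flag is not merely a difficulty to be negotiated: with that choice of factors it is an obstruction. To make your route work one must choose Drinfeld modules of \emph{different} ranks linked by a Frobenius twist, exactly as in the paper's Proposition 5, and then still verify the absence of cosets.

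The paper's own proof of this proposition takes a different and much cheaper route, which you may want to compare with. It takes $\mathcal{A}=C^{\otimes 2}$, the tensor square of the Carlitz module with $q=2$, which is abelian, uniformizable and \emph{simple} as a $T$-module. Simplicity disposes of your Step 4 entirely: there are no non-trivial sub-$T$-modules, hence no torsion sets of positive dimension inside any subvariety. The infinite supply of torsion points comes from the computation
\[\Phi(T^{2})\left(\begin{array}{c}X\\Y\end{array}\right)=\left(\begin{array}{c}T^{2}X+X^{2}\\T^{2}Y+(T+T^{2})X^{2}+Y^{2}\end{array}\right),\]
which shows that $0\times\mathbb{G}_{a}$ is a sub-$T^{2}$-module (though not a sub-$T$-module), and therefore contains infinitely many torsion points of $\mathcal{A}$. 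The lesson is that the counterexample is driven by the existence of proper subrings $\mathbb{F}_{q}[T^{j}]\subset A$, not by an arithmetic coincidence between two factors; your proposal does not isolate this mechanism and, without it, does not yet constitute a proof.
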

\begin{proof}
% We will firstly remark here that a na\"{i}f transposition of Scanlon's Theorem in this more general case is not necessarily true. We will use the following notion:

%En général la puissance tensorielle $C^{\otimes m}$ du module de Carlitz $C$ est toujours un $T-$module simple (voir \cite{Yu}, Proposition 1.2), abélien and uniformisable, mais on peut montrer qu'elle possède parfois des sous-$T^{j}-$modules ($j$ dépendant de $m$) non triviaux and fournissent donc des contre-exemples à une éventuelle généralisation du Theorem précédent. 
We consider the $T-$module of dimension $2$ defined by the tensor power $C^{\otimes 2}=(\mathbb{G}_{a}^{2},\Phi)$ of the Carlitz module $C$, introduced by G. Anderson and D. Thakur in \cite{An-Thk}. We suppose that $q=2$. Such a $T-$module is then under the following form:\[\Phi(T)(\tau)=\left(\begin{array}{cc}T&1\\0&T\end{array}\right)+\left(\begin{array}{cc}0&0\\1&0\end{array}\right)\tau.\]We can then show that for each $\left(\begin{array}{c}X\\Y\end{array}\right)\in C^{\otimes 2}$:\[\Phi(T^{2})\left(\begin{array}{c}X\\Y\end{array}\right)=\left(\begin{array}{c}T^{2}X+X^{2}\\T^{2}Y+(T+T^{2})X^{2}+Y^{2}\end{array}\right).\]The algebraic sub-group $0\times \mathbb{G}_{a}$ of $C^{\otimes 2}$ is then easily a sub-$T^{2}-$module, but not a sub-$T-$module%\footnote{Nous remarquons qu'au contraire le sous-groupe algébrique $\mathbb{G}_{a}\times 0$ de $\mathbb{G}_{a}^{2}$ n'est pas un sous-$T^{2}-$module de ce dernier, and donc sa préimage $\ov{e}^{-1}(\mathbb{G}_{a}\times 0)$ n'est pas un sous-$\mathcal{C}-$espace vectoriel de $Lie(C^{\otimes 2})$. En effet, un tel sous-groupe n'est pas stabilisé par l'action de $\Phi(T^{2})$ alors que tout sous-$\mathcal{C}-$espace de $Lie(C^{\otimes 2})\simeq \mathcal{C}^{2}$ doit \^{e}tre stabilisé par l'action de la différentielle.}
. Indeed, any tensor power $C^{\otimes m}$, for any power $m\in \mathbb{N}\setminus\{0\}$, of the Carlitz module $C$ is always a simple, abelian (see \cite{Goss}, Corollary 5.9.38) and uniformizable $T-$module (see \cite{Yu}, Proposition 1.2), but one can prove that it possesses sometimes (as in the present case) non-trivial sub-$T^{j}-$modules for some $j$ depending on $m$ and $q$. By choosing $0\times \mathbb{G}_{a}$ as an algebraic subvariety of $C^{\otimes 2}$, we now see that it contains infinitely many torsion points, which correspond, by Proposition 2 and discussion subsequent to Remark 1, to the $\mathbb{F}_{2}(T^{2})-$rational points of the torsion part of:\[Lie(0\times \mathbb{G}_{a})/(Ker(\ov{e}_{C^{\otimes 2}})\cap Lie(0\times \mathbb{G}_{a})).\]%Indeed, $A=\mathbb{F}_{2}[T]$ is naturally isomorphic to its subring $\mathbb{F}_{2}[T^{2}]$, so up to changing the variable $T$ to the variable $t:=T^{2}$ the construction of the exponential map and the tangent space can be repeated over $0\times \mathbb{G}_{a}$ (see our discussion after Definition 8) by seeing it as a sub-$t-$module of the $t-$module $C^{\otimes 2}$. %Therefore, it turns out that the set of torsion points in $0\times \mathbb{G}_{a}$ is in bijection with the set of the $\mathbb{F}_{2}(T^{2})-$rational points of the torsion part of $Lie(0\times \mathbb{G}_{a})/Ker(\ov{e}_{0\times \mathbb{G}_{a}})$,%Proposition 1 therefore provides infinitely many counter-examples to a na\"{i}f generalization of Theorem 2 as each one of these sub-$T^{j}-$modules $\mathcal{B}$ is in general an algebraic sub-variety of $C^{\otimes m}$ containing inifinitely many torsion points (which are in bijection via the exponential map with the infinite set of the $\mathbb{F}_{q}(T^{j})-$rational points of $Lie(\mathcal{B})/Ker(\ov{e}_{C^{\otimes m}})$
 %which are infinitely many. Indeed, 
%One can check (see our proof of Theorem 6) that the sub-$t-$module $0\times \mathbb{G}_{a}$ of $C^{\otimes 2}$ has rank 1, which implies that the torsion part of $Lie(0\times \mathbb{G}_{a})/Ker(\ov{e}_{0\times \mathbb{G}_{a}})$ is isomorphic to the $\mathbb{F}_{2}((1/t))-$vector space $\mathbb{F}_{2}((1/t))/\mathbb{F}_{2}[t]$%, which naturally embeds into $k_{\infty}/\mathbb{F}_{2}[T^{2}]$
%, whose the set of all the $\mathbb{F}_{2}(t)-$rational points is infinite. 
As $C^{\otimes 2}$ is simple as a $T-$module, $0\times \mathbb{G}_{a}$ cannot contain on the other hand any torsion set of positive dimension, which finally prove our statement. %$Lie(0\times \mathbb{G}_{a})/. For example, $C^{\otimes 2}$ is a simple $T-$module containing the algebraic sub-variety $0\times \mathbb{G}_{a}$, which is in particular a sub-$T^{2}-$module and contains infinitely many torsion points.
\end{proof}
%Plus généralement la puissance tensorielle $C^{\otimes m}$ du module de Carlitz $C$ est toujours un $T-$module simple (voir \cite{Yu}, Proposition 1.2), abélien and uniformisable, mais on peut montrer qu'elle possède parfois des sous-$T^{j}-$modules ($j$ dépendant de $m$) non triviaux and fournissent donc des contre-exemples à une éventuelle généralisation du Theorem précédent. 
Proposition 3 put in light how the structure of the generic polynomial ring $\mathbb{F}_{q}[T]$, which contains infinitely many subrings isomorphic to $\mathbb{F}_{q}[T]$ itself, determines the failure of any attempt to adapt the Manin-Mumford conjecture (even in its weaker formulation stated in Theorem 4) to abelian uniformizable $T-$modules, while the same conjecture is on the contrary true for abelian varieties over number fields because their algebraic structure is that of a group, which is a $\mathbb{Z}-$module and $\mathbb{Z}$ do not contain as a ring non-trivial subrings.\\\\
A similar phenomenon has been observed already studying Mordell-Lang conjecture for powers of Drinfeld modules of finite characteristic. More precisely, given a finitely generated submodule $\Gamma$ of $\mathbb{D}^{m}$ for some Drinfeld module $\mathbb{D}=(\mathbb{G}_{a},\Phi)$ and $m>1$, if $X$ is an algebraic subvariety of $\mathbb{D}^{m}$ then $X(\ov{k})\cap \Gamma$ might not be stabilized by the action of $\Phi(T)$ but it can be invariant anyway under the action of $\Phi(T^{n})$ for a suitable $n>1$. A detailed example can be found in \cite{Ghioca}, Remark 4.11. This leads to a formulation of Mordell-Lang conjecture for powers of some convenient cases of Drinfeld modules which extends in some sense the notion of submodule of $\mathbb{D}^{m}$ to that of \textit{sub-$\Phi(T^{n})-$module} for $n\in \mathbb{N}\setminus\{0\}$ depending on the chosen $\Phi$, $\Gamma$ and $X$ (see \cite{Ghioca}, Theorem 4.6).\\\\
We extend therefore the class of algebraic sub-modules of $\mathcal{A}$ in order to avoid counter-examples produced, as we showed in the proof of Proposition 3, by the abundance of subrings of $\mathbb{F}_{q}[T]$.\\\\
We start by giving the following definition.
\begin{de}
Let $\mathcal{A}=(\mathbb{G}_{a}^{m},\Phi)$ be a general $T-$module. Let $B$ be a sub-ring of $A$. We call \textbf{sub-$B-$module} of $\mathcal{A}$ any reduced connected algebraic sub-group $\mathcal{B}$ of $\mathbb{G}_{a}^{m}$ such that:\[\Phi(a(T))(\mathcal{B})\subseteq \mathcal{B},\texttt{   }\forall a(T)\in B.\]We say that a sub-set of $\mathcal{A}$ is a \textbf{torsion subvariety} if it is under the following form:\[\ov{x}+\mathcal{B};\]where $\ov{x}\in \mathcal{A}_{tors.}$ and there exists $B$ a sub-ring of $A$ such that $\mathcal{B}$ is a sub-$B-$module of $\mathcal{A}$.
\end{de}
The following result will allow us to ease the study of the sub-$B-$modules of $\mathcal{A}$ for each $B$ subring of $A$, reducing all of them to sub-$T^{j}-$modules for a convenient index $j\in \mathbb{N}\setminus\{0\}$ only depending on $\mathcal{A}$. 
\begin{thm}
Let $\mathcal{A}=(\mathbb{G}_{a}^{m},\Phi)$ be an abelian uniformizable $T-$module of dimension $m$. There exists then a number $j(\mathcal{A})\in \mathbb{N}\setminus\{0\}$ only depending on $\mathcal{A}$ such that for each %$a(T)\in A\setminus \mathbb{F}_{q}$ we have that every sub-$a(T)-$module of $\mathcal{A}$ is a sub-$T^{j(\mathcal{A})}-$module (in other words, for each 
$B$ sub-ring of $A$, every sub-$B-$module of $\mathcal{A}$ is a sub-$T^{j(\mathcal{A})}-$module). 
\end{thm}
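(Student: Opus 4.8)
The plan is to produce $j(\mathcal{A})$ explicitly from the nilpotent part of the differential and then to reduce the entire statement, through the uniformizing exponential, to the triviality that a scalar operator preserves every linear subspace. Write $N := d\Phi(T) - T I_{m}$, the nilpotent matrix of Definition 1. Since $N$ is an $m\times m$ nilpotent matrix we have $N^{m}=\ov{0}$, so its nilpotency index $\ell$ satisfies $\ell\le m$. I would set $j(\mathcal{A})$ to be the least power $p^{e}$ of $p$ with $p^{e}\ge \ell$ (for instance $j(\mathcal{A})=q^{m}$ always works); this quantity depends only on $\mathcal{A}$ through $\Phi(T)$, and in particular not on $B$ nor on the chosen sub-module.

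The first step is the differential computation. As $\Phi$ is a ring homomorphism and the differential $d$ kills $\tau$, one has $d\Phi(T^{j(\mathcal{A})})=(TI_{m}+N)^{j(\mathcal{A})}$. Because $TI_{m}$ is central and we work in characteristic $p$, the identity $(TI_{m}+N)^{p^{e}}=T^{p^{e}}I_{m}+N^{p^{e}}$ holds, and $N^{p^{e}}=\ov{0}$ by the choice $p^{e}\ge \ell$. Hence $d\Phi(T^{j(\mathcal{A})})=T^{j(\mathcal{A})}I_{m}$ is a scalar operator on $\mathrm{Lie}(\mathcal{A})\simeq \mathcal{C}^{m}$.

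Next I would pass from the Lie algebra to the group via the exponential. Let $\mathcal{B}$ be a sub-$B$-module; I may assume $B$ contains a non-constant polynomial $a(T)$, as is implicit for the notion to be non-vacuous (compare Definition 7, where $a(T)\in A\setminus\mathbb{F}_{q}$). Viewing $\mathcal{B}$ as a sub-$a(T)$-module and invoking the extension of Remark 1 recorded after Lemma 1, $\mathcal{B}$ is abelian and uniformizable, so the exponential $\ov{e}$ of $\mathcal{A}$ restricts to a surjection $\ov{e}\colon \mathrm{Lie}(\mathcal{B})\to \mathcal{B}$; thus $\mathcal{B}=\ov{e}(\mathrm{Lie}(\mathcal{B}))$ with $V:=\mathrm{Lie}(\mathcal{B})$ a $\mathcal{C}$-subspace of $\mathrm{Lie}(\mathcal{A})$. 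Now for $y\in\mathcal{B}$ write $y=\ov{e}(\ov{z})$ with $\ov{z}\in V$; iterating the functional equation of Definition 8 gives $\Phi(T^{j(\mathcal{A})})(y)=\ov{e}(d\Phi(T^{j(\mathcal{A})})\ov{z})=\ov{e}(T^{j(\mathcal{A})}\ov{z})$. Since $V$ is a $\mathcal{C}$-vector space it is stable under multiplication by the scalar $T^{j(\mathcal{A})}\in\mathcal{C}$, so $T^{j(\mathcal{A})}\ov{z}\in V$ and therefore $\Phi(T^{j(\mathcal{A})})(y)\in \ov{e}(V)=\mathcal{B}$. This yields $\Phi(T^{j(\mathcal{A})})(\mathcal{B})\subseteq\mathcal{B}$, i.e. $\mathcal{B}$ is a sub-$T^{j(\mathcal{A})}$-module, and the argument never used $B$ beyond guaranteeing the uniformizability of $\mathcal{B}$.

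The point I expect to require the most care is the clean identification $\mathcal{B}=\ov{e}(\mathrm{Lie}(\mathcal{B}))$: one must check that the exponential of the $T$-module $\mathcal{A}$ simultaneously serves as the exponential attached to its $a(T)$-module structure (which follows from the uniqueness in Definition 8 together with the functional equation, valid after iteration for every power of $T$), and that uniformizability of $\mathcal{B}$ as a sub-$a(T)$-module is precisely the surjectivity needed to write each $y\in\mathcal{B}$ as $\ov{e}(\ov{z})$ with $\ov{z}\in V$. The remaining ingredients — the bound $\ell\le m$ and the collapse of $d\Phi(T^{j(\mathcal{A})})$ to a scalar in characteristic $p$ — are elementary, and it is exactly this collapse that forces $j(\mathcal{A})$ to be uniform across all subrings $B$.
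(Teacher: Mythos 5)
Your proposal is correct and follows essentially the same route as the paper: choose $j(\mathcal{A})=p^{e}$ to be the least power of $p$ at least the nilpotency order of $N$, so that $d\Phi(T^{j(\mathcal{A})})=(TI_{m}+N)^{p^{e}}=T^{j(\mathcal{A})}I_{m}$ is scalar and stabilizes every $\mathcal{C}$-subspace, in particular $Lie(\mathcal{B})$. The only difference is that where the paper simply says the transfer from $Lie(\mathcal{B})$ back to $\mathcal{B}$ ``easily comes from Proposition 2,'' you spell out the genuinely needed ingredient --- the surjectivity $\mathcal{B}=\ov{e}(Lie(\mathcal{B}))$ via the uniformizability of $\mathcal{B}$ (Remark 1 applied to the $a(T)$-module structure) --- which is a welcome tightening rather than a departure.
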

\begin{proof}
Let $N$ be the nilpotent matrix associated to the differential $d\Phi(T)$ of $\mathcal{A}$ introduced in Definition 1. Let $n(\mathcal{A})\in \mathbb{N}\setminus\{0\}$ be its order. Let $\mathcal{B}$ be a sub-$B-$module of $\mathcal{A}$ for some $B$ subring of $A$. It is then a sub-$a(T)-$module for each $a(T)\in B\setminus \mathbb{F}_{q}$ and in particular a reduced connected algebraic sub-group of $\mathbb{G}_{a}^{m}$. In order to prove the statement it will be sufficient to show that $Lie(\mathcal{B})$ is stabilized by the action of $d\Phi(T^{j(\mathcal{A})})$ for some convenient $j(\mathcal{A})\in \mathbb{N}\setminus\{0\}$. The reason of this easily comes from Proposition 2. We choose therefore:\[j(\mathcal{A})=p^{r(\mathcal{A})};\]by calling $p^{r(\mathcal{A})}$ the smallest power of $p$ to be greater or equal to $n(\mathcal{A})$ (in other words, $r(\mathcal{A})=[\log_{p}(n(\mathcal{A}))]+1$, where we mean by $[\log_{p}(n(\mathcal{A}))]$ the integer part of $\log_{p}(n(\mathcal{A}))$). It is easy to see then that:\[d\Phi(T^{j(\mathcal{A})})=T^{j(\mathcal{A})}I_{m};\]which stabilizes every vector sub-space of $Lie(\mathcal{A})$ over $\mathcal{C}$. So in particular, it stabilizes $Lie(\mathcal{B})$ too.
\end{proof}
We remark that we just proved the existence of such a number $j(\mathcal{A})\in \mathbb{N}\setminus\{0\}$, but we did not found actually its minimal possible value in principle. We can anyway remark that such a value is $1$ in the case where $\mathcal{A}$ is a power of a Drinfeld module, as we will show here below. %according with Scanlon's Theorem. 
This shows that for each subring $B$ of $A$  every sub-$B-$module of $\mathcal{A}$ is actually a sub-$T-$module. This is precisely the reason why examples like the one presented in the proof of Proposition 3 would not work for some finite power of a Drinfeld module, being so another confirmation of Scanlon's result (Theorem 5).\\\\
We show now this property using the following Thiery's Theorem (see \cite{T}).
%\begin{rem}
%Each sub-$T^{j}-$module is obviously a sub-$a(T^{j})-$module too, for each $a\in A\setminus \mathbb{F}_{q}$, on the contrary it seems that gradation par le degré des polynômes définissant le $T-$module ne permette pas de trouver un exemple de sub-$a(T)-$module ne provenant pas d'un sub-$T^{j}-$module pour un $j$ bien choisi. Un cas fondamental illustrant ce phénomène est d'ailleurs celui d'une puissance pure d'un module de Drinfeld, cas que nous discutons ci-dessous. %Aussi nous nous limiterons par la suite à la considération simplificatrice des sous-$T^{j}-$modules.
%\end{rem}
\begin{thm}
Let $\mathbb{D}^{m}=(\mathbb{G}_{a}^{m},\Phi^{m})$ the $m-$th power of a Drinfeld module $\mathbb{D}=(\mathbb{G}_{a},\Phi)$ and let $\mathcal{F}$ be its coefficients field. There exists therefore a bijective correspondence between the family of all sub-$T-$modules of $\mathbb{D}^{m}$ and the family of the vector sub-spaces of $Lie(\mathbb{D}^{m})$ which are $End_{\mathcal{F}}(\Phi)-$rational. This correspondence is given by the exponential function:\[\ov{e}:Lie(\mathbb{D}^{m})\to \mathbb{D}^{m};\]\[V\mapsto \ov{e}(V).\]Moreover, we have that the dimension of any sub-$T-$module of $\mathbb{D}^{m}$ and of the $End_{\mathcal{F}}(\Phi)-$rational vector sub-space of $Lie(\mathbb{D}^{m})$ corresponding to it are the same.
\end{thm}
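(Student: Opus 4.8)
The plan is to realise the correspondence analytically, through the uniformisation of $\mathbb{D}^{m}$, and to concentrate all the work in a single geometric criterion: deciding when the image $\ov{e}(V)$ of a $\mathcal{C}$-subspace $V\subseteq Lie(\mathbb{D}^{m})$ is Zariski-closed. Write $R:=End_{\mathcal{F}}(\Phi)$ and let it act on $Lie(\mathbb{D}^{m})\simeq\mathcal{C}^{m}$ diagonally through the differential $u\mapsto du$, which embeds $R$ and its fraction field $K:=Frac(R)$ into $\mathcal{C}$. Let $\Lambda_{0}\subseteq\mathcal{C}$ be the rank-$d$ lattice of $\mathbb{D}$, so that the lattice of the power $\mathbb{D}^{m}$ is $\Lambda:=\Lambda_{0}^{m}=Ker(\ov{e})$; since $R$ preserves $\Lambda_{0}$, the set $\Lambda\otimes_{A}k$ is stable under $K$, and the term $End_{\mathcal{F}}(\Phi)$-rational is to be read as: $V$ is generated over $\mathcal{C}$ by $V\cap(\Lambda\otimes_{A}k)$, i.e. $V$ is defined over the $R$-structure carried by $\Lambda$. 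The first observation is that, because $\mathbb{D}^{m}$ is a power, $d\Phi^{m}(T)=TI_{m}$ is a scalar, so \emph{every} $\mathcal{C}$-subspace of $Lie(\mathbb{D}^{m})$ is automatically stable under $d\Phi^{m}(T)$; hence the whole content of the theorem is forced into the closedness criterion, and the rational subspaces, rather than all subspaces, are singled out precisely by it.

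First I would treat the assignment $V\mapsto\ov{e}(V)$. Since $\ov{e}$ is $\mathbb{F}_{q}$-linear and a local homeomorphism at $\ov{0}$, the image $\ov{e}(V)$ is a connected subgroup of $\mathbb{G}_{a}^{m}$ with tangent space $V$, and it is $\Phi(T)$-stable because $\Phi(T)(\ov{e}(V))=\ov{e}(d\Phi^{m}(T)V)=\ov{e}(TV)=\ov{e}(V)$. The delicate point is that $\ov{e}(V)$ be reduced and Zariski-closed, and this is exactly where rationality enters. The key lemma I would prove is that $\ov{e}(V)$ is a sub-$T$-module if and only if $V\cap\Lambda$ is an $A$-lattice of full rank (cocompact) in $V$. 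For the sufficiency half: if $V$ is $End_{\mathcal{F}}(\Phi)$-rational, a $\mathcal{C}$-basis of $V$ drawn from $V\cap(\Lambda\otimes_{A}k)$ can, after clearing denominators against the $R$-lattice $\Lambda_{0}$, be scaled into $\Lambda$, so $V\cap\Lambda$ is cocompact; then $V/(V\cap\Lambda)$ uniformises $\ov{e}(V)$ and, by Lemma 1 together with the discussion following Remark 1, realises $\ov{e}(V)$ as an abelian, uniformizable sub-$T$-module, in particular an algebraic subgroup of the correct dimension.

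For the reverse assignment and the bijection I would start from a sub-$T$-module $\mathcal{B}$ and set $V:=Lie(\mathcal{B})$. By Remark 1, $\mathcal{B}$ is abelian and uniformizable, so Proposition 2 together with surjectivity of the induced exponential gives $\ov{e}(Lie(\mathcal{B}))=\mathcal{B}$, while the local-homeomorphism property gives $Lie(\ov{e}(V))=V$; thus $V\mapsto\ov{e}(V)$ and $\mathcal{B}\mapsto Lie(\mathcal{B})$ are mutually inverse once each target is identified. It remains to verify that $V=Lie(\mathcal{B})$ is $End_{\mathcal{F}}(\Phi)$-rational, which is the necessity half of the key lemma: uniformizability of $\mathcal{B}$ forces $V\cap\Lambda$ to be cocompact in $V$, and, using the torsion/free decomposition $Lie(\mathbb{D}^{m})\simeq(k_{\infty}/A)^{dm}\oplus Free_{k_{\infty}}$ recalled after Remark 1 together with the fact that $\Lambda=\Lambda_{0}^{m}$ is an $R$-module, a subspace carrying a cocompact sublattice of $\Lambda$ must be spanned by vectors of $\Lambda\otimes_{A}k$, i.e. is rational. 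Finally the dimension statement is immediate, since $\ov{e}$ is a local homeomorphism: $\dim\mathcal{B}=\dim_{\mathcal{C}}Lie(\mathcal{B})=\dim_{\mathcal{C}}V$.

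I expect the main obstacle to be exactly the closedness criterion in its two halves, and in particular the \emph{necessity} that rationality follow from $\ov{e}(V)$ being algebraic. Sufficiency is essentially the denominator-clearing argument and is mechanical. Necessity requires controlling how a full-rank $A$-sublattice of the $R$-module $\Lambda=\Lambda_{0}^{m}$ can sit inside a $\mathcal{C}$-subspace and deducing that the subspace descends to the $K$-structure; this is the one step where the endomorphism ring $R$ genuinely enters rather than the bare scalar action of $T$, and matching the resulting cocompactness condition with the definition of $End_{\mathcal{F}}(\Phi)$-rationality (especially in the presence of complex multiplication, where $K$ is a proper extension of $k$) is where the care is needed.
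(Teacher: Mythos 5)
You should first know that the paper does not prove this statement at all: it is quoted as Thiery's theorem and the entire proof is the citation ``[T], Theorem at page 33'', so any real argument is necessarily a different route. Your route, however, contains a gap that is not merely a missing detail: you have misread what ``$End_{\mathcal{F}}(\Phi)$-rational'' means, and with your reading the correspondence is false. You take $V$ to be rational when it is generated over $\mathcal{C}$ by $V\cap(\Lambda\otimes_{A}k)$. The notion the theorem (and the paper, in the proof of Proposition 4, where $\mathcal{B}$ is cut out by equations $\sum_{j}P_{ij}(\tau)X_{j}=0$ with $P_{ij}\in End_{\mathcal{F}}(\Psi)$) actually uses is that $V$ is cut out by linear forms with coefficients in $d(End_{\mathcal{F}}(\Phi))$, equivalently $V=V_{K}\otimes_{K}\mathcal{C}$ for a $K$-subspace $V_{K}\subseteq K^{m}$, where $K=Frac(End_{\mathcal{F}}(\Phi))$ is embedded in $\mathcal{C}$ via the differential. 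These two notions are not equivalent. Take $\mathbb{D}$ of rank $d=2$ with $End_{\mathcal{F}}(\Phi)=A$, let $\omega_{1},\omega_{2}$ be an $A$-basis of $\Lambda_{0}$, let $m=2$ and $V=\mathcal{C}\cdot(\omega_{1},\omega_{2})\subseteq Lie(\mathbb{D}^{2})$. Then $V$ is spanned by a lattice vector, hence rational in your sense; but $V\cap\Lambda=A\cdot(\omega_{1},\omega_{2})$ has $A$-rank $1$, whereas a one-dimensional sub-$T$-module of $\mathbb{D}^{2}$ admits a nonzero morphism to $\mathbb{D}$, hence is isogenous to $\mathbb{D}$ and carries a rank-$2$ period lattice. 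So $\ov{e}(V)$ is not Zariski closed, and the sufficiency half of your key lemma fails: clearing denominators on a basis drawn from $\Lambda\otimes_{A}k$ produces only $\dim_{\mathcal{C}}V$ independent periods, while the closedness criterion requires $d\cdot\dim_{\mathcal{C}}V$ of them. With the correct notion of rationality the count does come out right, because $\Lambda_{0}\otimes_{A}k$ is a $K$-space of dimension $d/[K:k]$ and $V_{K}\otimes_{K}(\Lambda_{0}\otimes_{A}k)$ sits inside $V\cap(\Lambda\otimes_{A}k)$.

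Even after repairing the definition, the two steps you yourself flag as the crux are asserted rather than proved. For sufficiency, you invoke Lemma 1 and Remark 1 to conclude that $V/(V\cap\Lambda)$ ``realises $\ov{e}(V)$ as an abelian, uniformizable sub-$T$-module''; but those statements presuppose that one already has an algebraic subgroup, whereas what is needed is the converse construction -- that a lattice of rank $d\cdot\dim_{\mathcal{C}}V$ in $V$, stable under the relevant endomorphisms, descends to an algebraic $T$-module structure on the quotient (the analogue of Drinfeld's analytic uniformization in dimension $>1$). That construction is the actual content of Thiery's theorem and does not appear in your sketch. For necessity, showing that $Lie(\mathcal{B})$ is $K$-rational amounts to showing that the lattice $Lie(\mathcal{B})\cap\Lambda$ has the form $V_{K}\otimes_{K}(\Lambda_{0}\otimes_{A}k)$, i.e.\ that $\mathcal{B}$ is isogenous to a power of $\mathbb{D}$; this rests on semisimplicity of $\mathbb{D}^{m}$ up to isogeny, which you correctly identify as ``where $R$ genuinely enters'' but do not carry out. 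The dimension statement and the mutual inverseness of $V\mapsto\ov{e}(V)$ and $\mathcal{B}\mapsto Lie(\mathcal{B})$ are fine once these points are settled.
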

\begin{proof}
See \cite{T}, Theorem at page 33.
\end{proof}
\begin{prop}
Let $\mathbb{D}^{m}$ be as in Theorem 5. Then:\[j(\mathbb{D}^{m})=1.\]
\end{prop}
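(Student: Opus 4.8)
The plan is to show that $j(\mathbb{D}^{m})=1$, i.e. that for the $m$-th power of a Drinfeld module, every sub-$B$-module (for any subring $B$ of $A$) is already a sub-$T$-module. By Theorem 6 (the existence result) we already know that some $j(\mathbb{D}^m)$ works; the content here is that the minimal such value is $1$. I would reduce everything to the level of tangent spaces, exactly as the proof of Theorem 6 does: since $\mathbb{D}^m$ is abelian and uniformizable (stated in the excerpt), and by Proposition 2 together with Thiery's Theorem (Theorem 7), a reduced connected algebraic subgroup $\mathcal{B}$ is stabilized by $\Phi(T)$ if and only if its tangent space $Lie(\mathcal{B})$ is stabilized by $d\Phi(T)$. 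So the whole claim becomes a linear-algebra statement about which $\mathcal{C}$-subspaces of $Lie(\mathbb{D}^m)$ are invariant under $d\Phi(T)$.

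First I would compute $d\Phi(T)$ explicitly in the power case. Because $\Phi$ acts diagonally on $\mathbb{G}_a^m$ via a single Drinfeld module $\Phi(T)=\sum_{i}a_i\tau^i$ with $a_0=T$ (a Drinfeld module has dimension $1$, so the nilpotent matrix $N$ of Definition 1 is zero), the differential of the power is simply $d\Phi(T)=T\cdot I_m$. This is the crucial simplification: whereas a general abelian $T$-module has $d\Phi(T)=TI_m+N$ with $N$ a genuinely nonzero nilpotent, here $N=0$, so in the notation of the proof of Theorem 6 the order $n(\mathbb{D}^m)=1$ and hence $r(\mathbb{D}^m)=[\log_p 1]+1=1$, giving $j(\mathbb{D}^m)=p^{0}$... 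I would instead argue directly that $n(\mathbb{D}^m)=1$ forces $j=1$, being careful about the convention $[\log_p(1)]+1=1$, which yields $p^{1}=p$ rather than $1$ from the formula in Theorem 6; this is exactly why one must argue by hand rather than blindly plug in. The honest statement is that when $N=0$ the matrix $d\Phi(T)=TI_m$ is already a scalar, so it stabilizes \emph{every} $\mathcal{C}$-subspace of $Lie(\mathbb{D}^m)$, and no higher power is needed.

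The key steps in order: (i) note $d\Phi(T)=TI_m$ for $\mathbb{D}^m$, since the underlying Drinfeld module has $a_0=T$ with no nilpotent part; (ii) observe that scalar multiplication by $T$ stabilizes every $\mathcal{C}$-linear subspace $Lie(\mathcal{B})\subseteq Lie(\mathbb{D}^m)$ trivially; (iii) invoke Thiery's Theorem (Theorem 7) to transport this back: since sub-$T$-modules of $\mathbb{D}^m$ correspond bijectively via $\ov{e}$ to the $End_{\mathcal{F}}(\Phi)$-rational subspaces of $Lie(\mathbb{D}^m)$, and any sub-$B$-module $\mathcal{B}$ has $Lie(\mathcal{B})$ stabilized by $d\Phi(T)=TI_m$, the subgroup $\mathcal{B}$ satisfies $\Phi(T)(\mathcal{B})\subseteq\mathcal{B}$ and is therefore a genuine sub-$T$-module; (iv) conclude $j(\mathbb{D}^m)=1$.

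The main obstacle, and the point that deserves the most care, is step (iii): stabilization of $Lie(\mathcal{B})$ by $d\Phi(T)$ is not by itself the \emph{definition} of being a sub-$T$-module, which is a statement about $\Phi(T)$ acting on $\mathcal{B}$ inside $\mathbb{G}_a^m$. Passing from the Lie-algebra/analytic side back to the algebraic-group side is precisely what requires uniformizability and Thiery's rationality criterion, so I would lean on Theorem 7 rather than try to verify the group-level inclusion directly. A secondary subtlety is reconciling the clean answer $j=1$ with the formula $j(\mathcal{A})=p^{r(\mathcal{A})}$ from Theorem 6, which in the degenerate case $N=0$ (order $n=1$) does not literally evaluate to $1$; the resolution is that Theorem 6 only asserts \emph{existence} of a working exponent, and for $\mathbb{D}^m$ the sharper value $1$ is available because $d\Phi(T)$ is already scalar. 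I would state this comparison explicitly so the reader sees that Proposition 6 refines, rather than merely specializes, the general bound.
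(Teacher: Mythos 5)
Your route is genuinely different from the paper's, so let me first record what the paper actually does. It never argues on the tangent space: it views $\mathcal{B}$ as a sub-$T'$-module for $T':=a(T)$, $\Psi(T'):=\Phi(a(T))$, applies Thiery's theorem to write $\mathcal{B}$ as the zero locus of linear equations $\sum_{j}P_{ij}(\tau)X_{j}=0$ with $P_{ij}\in End_{\mathcal{F}}(\Psi)$, and then observes that $\Phi(T)\in End_{\mathcal{F}}(\Psi)$ and that $End_{\mathcal{F}}(\Psi)$ is commutative (generic characteristic), so applying $\Phi(T)$ diagonally to a point of $\mathcal{B}$ preserves the defining equations; the inclusion $\Phi(T)(\mathcal{B})\subseteq\mathcal{B}$ is obtained directly at the level of the algebraic group. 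Your observation that $d\Phi(T)=TI_{m}$ for a power of a Drinfeld module (no nilpotent part in dimension $1$) is correct, and your remark that the formula $r(\mathcal{A})=[\log_{p}(n(\mathcal{A}))]+1$ does not literally return $1$ when $N=0$, even though the verbal description ``smallest power of $p$ greater or equal to $n(\mathcal{A})$'' does, is a fair and worthwhile observation.

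However, your step (iii) has a genuine gap as written. ``$Lie(\mathcal{B})$ is stabilized by $TI_{m}$'' is a vacuous condition --- every $\mathcal{C}$-subspace satisfies it --- so it cannot, even combined with Thiery's bijection (which concerns $End_{\mathcal{F}}(\Phi)$-rational subspaces, a rationality condition you never verify for $Lie(\mathcal{B})$), yield $\Phi(T)(\mathcal{B})\subseteq\mathcal{B}$. Concretely, in the square of the Carlitz module the reduced connected algebraic subgroup $\{(x,y):y=x^{q}\}$ has tangent space $\{v=0\}$, which is stabilized by $TI_{2}$, yet the subgroup is not stabilized by $\Phi(T)$ since $(Tx+x^{q})^{q}\neq Tx^{q}+x^{q^{2}}$. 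The missing ingredient is the identity $\mathcal{B}=\ov{e}(Lie(\mathcal{B}))$, and this is exactly where the hypothesis that $\mathcal{B}$ is a sub-$B$-module must enter: by Remark 1, $\mathcal{B}$ is uniformizable as a sub-$a(T)$-module of the uniformizable $a(T)$-module $\mathcal{A}$ (equivalently, Thiery's theorem applied to the $\Psi$-structure gives $\mathcal{B}=\ov{e}(V)$ with $V=Lie(\mathcal{B})$). Once that equality is in hand, the functional equation $\Phi(T)\circ\ov{e}=\ov{e}\circ d\Phi(T)$ gives $\Phi(T)(\mathcal{B})=\ov{e}(T\cdot Lie(\mathcal{B}))\subseteq\ov{e}(Lie(\mathcal{B}))=\mathcal{B}$ and your argument closes, recovering the proof of Theorem 7 specialized to $N=0$, $j=1$. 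So the idea is sound and the gap is fixable, but the bridge from the Lie side back to the group side is the entire content of the step, and the justification you give for it does not supply it.
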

\begin{proof}
%Un tel résultat n'est pas valable en général pour un $T-$module abélien and uniformisable quelconque (voir le contrexemple du $T-$module $C^{\otimes 2}$ qu'on a justement décrit dans la Remark 2.2.6). 
Let $\mathcal{B}$ be a sub-$B-$module of $\mathbb{D}^{m}$ for a non-trivial subring $B$ of $A$. It is therefore a sub-$a(T)-$module of $\mathcal{A}$ for any $a(T)\in B\setminus \mathbb{F}_{q}$. We assume that the Drinfeld module $\mathbb{D}$ has rank $d$. We define:\[T':=a(T);\]and:\[\Psi(T'):=\Phi(a(T));\]$\mathcal{B}$ is therefore a $T'-$module which is the $m-$th power of some $\mathbb{F}_{q}[T']-$Drinfeld module% of rank $d\deg_{T}(a(T))$
. By Theorem 8 $\mathcal{B}$ is the zero locus of $s=m-\dim(\mathcal{B})$ linear equations under the following form:\[\sum_{j=1}^{m}P_{ij}(\tau)X_{j}=0;\]where $P_{ij}(\tau)\in End_{\mathcal{F}}(\Psi)$ for each $i=1, ..., s$ and each $j=1, ..., m$. Now, it is well known that $\Phi(T)\in End_{\mathcal{F}}(\Psi)$. %In fact, $\Phi(T)\circ \Psi(T')=\Phi(T)\circ \Phi(a(T))=\Phi(a(T))\circ \Phi(T)=\Psi(T')\circ \Phi(T)$. 
Moreover, $End_{\mathcal{F}}(\Psi)$ is a commutative ring because $\mathbb{D}$ has characteristic $0$ (see \cite{Goss}, Definition 4.4.1). Therefore, one sees that $\mathcal{B}$ is actually a sub-$T-$module of $\mathbb{D}^{m}$ as it is stabilized by the action of $\Phi(T)$.%\\\\Each sub-$a(T)-$module of $\mathbb{D}^{m}$ is then a sub-$T-$module too, for every $a(T)\in A\setminus \mathbb{F}_{q}$.
\end{proof}
%\begin{conj}
%Soit $\mathcal{A}=(\mathbb{G}_{a}^{m},\Phi)$ un $T-$module abélien and uniformisable de dimension $m$. Il existe alors un nombre $j(\mathcal{A})\in \mathbb{N}\setminus\{0\}$ ne dépendant que de $\mathcal{A}$ such that pour tout $a(T)\in A\setminus \mathbb{F}_{q}$ on ait que tout sub-$a(T)-$module de $\mathcal{A}$ est un sub-$T^{j(\mathcal{A})}-$module. 
%\end{conj}
If $\mathcal{A}$ is \textbf{absolutely simple} (this means that it does not contain any non-trivial sub-$T^{j}-$module for each $j\in \mathbb{N}\setminus\{0\}$) we fix $j(\mathcal{A})=1$.\\\\
We also remark that this study of powers of some Drinfeld module provides an example of a class of $T-$modules $\mathcal{A}$ such that for each $i\in \mathbb{N}\setminus\{0\}$ we have that $j(\mathcal{A}^{i})=j(\mathcal{A})$ and one may wonder if it is a general phenomenon.\\\\%(we analyzed specific examples based on tensor powers of Drinfeld modules in \cite{D}, section 2.2).\\\\
This would lead us to a more general formulation of Manin-Mumford conjecture on abelian and uniformizable $T-$modules, %Tout ceci nous suggère alors une formulation différente de la Conjecture de Manin-Mumford dans le cas d'un $T-$module $\mathcal{A}$, où on se 
where we propose to show that, up to a finite number, the torsion points of $\mathcal{A}$ could be shared in finitely many sub-$T^{j(\mathcal{A})}-$modules. We would like then to state an analogue of Theorem 4 as in the following formulation.%, ayant initialement fixé un nombre $j(\mathcal{A})\in \mathbb{N}\setminus\{0\}$, ne dépendant que du choix de $\mathcal{A}$ qui ne sera donc pas forcément $1$, à la différence de ce que conduirait à penser un analogue immédiat avec la formulation classique de cette conjecture.\\\\
\begin{for}
Let $X$ be an algebraic subvariety of an abelian uniformizable $T-$module $\mathcal{A}$. If $X$ does not contain any torsion subvariety of $\mathcal{A}$ of dimension $>0$, then $X$ contains at most finitely many torsion points.
\end{for}
Such a new formulation is however false again and not yet sufficient to completely exclude other counter-examples. The nice one which follows has been suggested by Laurent Denis. 
\begin{prop}
There exists an algebraic subvariety $X$ of an abelian uniformizable $T-$module $\mathcal{A}$ which does not contain non-trivial torsion subvarieties of $\mathcal{A}$ but it contains infinitely many torsion points.
\end{prop}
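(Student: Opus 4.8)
The plan is to refute Formulation 2 by exhibiting a genuine failure of Manin--Mumford on a $T$-module that escapes Scanlon's Theorem 5. I would keep the module of Proposition 3, namely $\mathcal{A}=C^{\otimes 2}=(\mathbb{G}_a^2,\Phi)$ with $q=2$, which is simple, abelian and uniformizable of rank $1$ and dimension $2$, but which is \emph{not} isogenous to a power of a Drinfeld module; hence Theorem 5 imposes no constraint and there is room for the statement to break. By the proof of Theorem 6 the nilpotent part $N$ of $d\Phi(T)$ has order $2$, so $j(\mathcal{A})=2$: every sub-$B$-module is a sub-$T^2$-module, and therefore every positive-dimensional torsion subvariety of $\mathcal{A}$ is the translate by a torsion point of a $\Phi(T^2)$-stable connected algebraic subgroup of $\mathbb{G}_a^2$. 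Since every connected algebraic subgroup of $\mathbb{G}_a^2$ is the zero locus of an $\mathbb{F}_q$-additive polynomial map, each such torsion subvariety is cut out by an equation $f(x)+g(y)=c$ with $f,g$ additive (that is, $\mathbb{F}_q$-linear combinations of the $x^{q^i}$ and $y^{q^i}$).

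This reduces the problem to a single task: produce an irreducible plane curve $X\subset\mathbb{G}_a^2$ which (i) contains infinitely many torsion points of $C^{\otimes 2}$, and (ii) is defined by a genuinely \emph{non-additive} equation, i.e. one containing a monomial that is not a $q$-power (for instance a term $x^3$ or $xy$). Property (ii) is cheap to guarantee by inspection, and it immediately forbids $X$ from being a torsion-translate of any additive subgroup, hence from containing any positive-dimensional torsion subvariety: such a subvariety would be a $1$-dimensional component of the irreducible curve $X$, so equal to $X$, forcing an additive defining equation. Property (i) is the whole content.

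To realize (i) I would exploit the rank-one uniformization recalled after Remark 1: the torsion points are exactly $\ov{e}(c\ov{\omega})$ for $c\in k/A$ and the single period $\ov{\omega}$, a one-parameter family, so the two coordinates of a torsion point are algebraically dependent along the torsion locus. Concretely I would compute the $T$-power torsion recursively from
\[\Phi(T)\binom{x}{y}=\binom{Tx+y}{Ty+x^2},\]
solving $\Phi(T^n)\binom{x}{y}=\ov{0}$ level by level, and read off from the Anderson--Thakur explicit model of $C^{\otimes 2}$ in \cite{An-Thk} a fixed non-additive polynomial relation satisfied by an infinite subfamily of these points. The output is the desired curve $X$ together with an explicit infinite set of torsion points lying on it.

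The main obstacle is the simultaneous fulfilment of (i) and (ii). The torsion of $C^{\otimes 2}$ is spread out (indeed plausibly Zariski-dense), so a generic curve meets it only finitely often, and the relations among torsion coordinates that are easy to write down are additive: they merely recover the cosets of the sub-$T^2$-modules (such as $\{x=0\}$, $\{y=Tx\}$ and $\{y=(1+T)x\}$) and thus yield no counterexample. The delicate point is to eliminate the uniformizing parameter $c$ in a way that certifies a non-additive component of the resulting relation while still retaining infinitely many torsion solutions. This is possible here, and not for powers of a Drinfeld module, because the tensor structure forces $d\Phi(T^2)=T^2 I_2$, which stabilizes every $\mathcal{C}$-subspace of $Lie(\mathcal{A})$; the Lie-level rigidity underlying the Manin--Mumford phenomenon for abelian varieties and, via Scanlon, for Drinfeld powers is therefore absent, and it is exactly this collapse that the counterexample must convert into infinitely many torsion points off every additive, hence special, curve.
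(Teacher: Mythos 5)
Your proposal is a programme, not a proof: the decisive step --- producing an irreducible, non-additive curve in $C^{\otimes 2}$ containing infinitely many torsion points --- is exactly the part you leave unexecuted (``read off \dots a fixed non-additive polynomial relation satisfied by an infinite subfamily of these points''), and you offer no evidence that such a relation exists beyond the assertion that ``this is possible here.'' Worse, the claim is almost certainly the opposite of the truth: $C^{\otimes 2}$ is precisely one of the $T$-modules singled out later in the paper as satisfying the invertible-leading-coefficient hypothesis (for $i=2$, see Proposition 6), i.e.\ one of the ``good cases'' for which Conjecture 1 predicts that a curve containing no positive-dimensional torsion subvariety meets the torsion in only finitely many points. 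So a successful execution of your plan would refute the paper's main conjecture rather than merely Formulation 2. You also misdescribe the torsion of $C^{\otimes 2}$: since $d\Phi(a(T))=\left(\begin{smallmatrix}a&a'\\0&a\end{smallmatrix}\right)$ is not scalar, the $a(T)$-torsion is $\ov{e}(d\Phi(a)^{-1}\Lambda)$ and is not of the form $\ov{e}(c\ov{\omega})$ for scalars $c\in k/A$; the algebraic dependence between the two coordinates of torsion points that your step (i) relies on is therefore not established. Finally, note that with $q=2$ a curve such as $x=y^{2}$ \emph{is} cut out by an additive polynomial, so your dichotomy ``non-additive equation $\Rightarrow$ counterexample candidate'' excludes by design the kind of example that actually works.

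The paper's proof avoids $C^{\otimes 2}$ altogether and works on a product of two non-isogenous Drinfeld modules of different ranks: $\mathbb{D}_{1}=(\mathbb{G}_a,C)$ the Carlitz module and $\mathbb{D}_{2}=(\mathbb{G}_a,C_{(2)})$ with $C_{(2)}(T)(\tau)=T+(T^{1/2}+T)\tau+\tau^{2}$, chosen so that $C_{(2)}(T)(\sqrt{z})=\sqrt{C(T^{2})(z)}$. Then every $z\in C_{tors.}$ gives a torsion point $(z,z^{1/2})$ of $\mathbb{D}_{1}\times\mathbb{D}_{2}$, all of which lie on the curve $X=Y^{2}$; this curve is one-dimensional, and since $C(T^{j})(Y^{2})\neq (C_{(2)}(T^{j})(Y))^{2}$ for all $j$ it is not stable under any $\Phi(T^{j})$, hence is not (and, being irreducible of dimension $1$, contains no) positive-dimensional torsion subvariety. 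Crucially, this $T$-module has non-invertible leading coefficient matrix for every iterate $\Phi(T^{j})$ (the two factors have ranks $1$ and $2$), so the example motivates, rather than contradicts, the hypothesis added in Conjecture 1. If you want to salvage your write-up, switch to a construction of this type; as it stands, the central existence claim is unsupported and the chosen ambient module is the wrong one.
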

\begin{proof}
We consider the following case of a product of two non-isogeneous Drinfeld modules. We assume that $q=2$. Let $\mathbb{D}_{1}=(\mathbb{G}_{a},C)$ be the Carlitz module, so that $C(T)(\tau)=T+\tau$, being $\tau$ the Frobenius automorphism over $\ov{\mathbb{F}_{2}}$. We define the Drinfeld module $\mathbb{D}_{2}=(\mathbb{G}_{a},C_{(2)})$ as follows:\[C_{(2)}(T)(\tau):=T+(T^{1/2}+T)\tau+\tau^{2}.\]One can see then that for each $z\in \mathcal{C}$ we have that:\[C_{(2)}(T)(\sqrt{z})=\sqrt{C(T^{2})(z)}.\]The product $\mathbb{D}_{1}\times \mathbb{D}_{2}=(\mathbb{G}_{a}^{2},\Phi)$ such that:\[\Phi(T)\left(\begin{array}{c}X\\Y\end{array}\right):=\left(\begin{array}{c}C(T)(X)\\C_{(2)}(T)(Y)\end{array}\right);\]is then a $T-$module as in Definition 1 and for each $z\in C_{tors.}$ we have that $(z,z^{1/2})\in (\mathbb{D}_{1}\times \mathbb{D}_{2})_{tors.}$. The algebraic variety $X=Y^{2}$ contains then all these infinitely many torsion points and, as $C(T^{j})(Y^{2})\neq (C_{(2)}(T^{j})(Y))^{2}$ for each $j\in\mathbb{N}\setminus\{0\}$, it is not stabilized by the action of $\Phi(T^{j})$. It can not be then a sub-$T^{j}-$module of $\mathbb{D}_{1}\times \mathbb{D}_{2}$ for any $j\in \mathbb{N}\setminus\{0\}$. As this variety has $\mathcal{C}-$dimension $1$, it can not admit non-trivial sub-$T^{j}-$modules either.
\end{proof}
We remark that the same counter-example may be repeated identically for any $q$, replacing the Carlitz module $C$ by a generic Drinfeld module $\mathbb{D}_{1}=(\mathbb{G}_{a},\Phi_{1})$ and $C_{(2)}$ by the Drinfeld module $\mathbb{D}_{2}=(\mathbb{G}_{a},\Phi_{2})$ obtained as the $1/q^{s}-$th root of the coefficients of $\Phi_{1}(T^{q^{s}})$, which would define an infinite class of bad cases%\footnote{Nous remarquons qu'au contraire l'hypothesis que les modules de Drinfeld de ce produit soient deux à deux isogènes rend stable la variété algébrique contenant les points de torsion. En effet, en supposant $m=2$ pour simplifier, si $P(\tau)\in Hom_{\mathcal{F}}(\Phi_{1},\Phi_{2})\setminus\{0\}$ est une isogénie entre les deux modules de Drinfeld, $\Phi_{2}$ sera alors "`racine $P(\tau)-$ième de $\Phi_{1}$. Autrement dit, $\Phi_{1}(T)(\tau)\circ P(\tau)=P(\tau)\circ \Phi_{2}(T)(\tau)$ and les points de torsion de $\Phi_{1}(T)\times \Phi_{2}(T)$ contiennent tous ceux sous la forme $(P(\tau)(z),z)$, où $z$ est point de torsion par rapport à $\Phi_{2}$. La variété algébrique $X=P(\tau)(Y)$ qui contient tous les points de torsion est alors facilement stabilisée par l'action de $\Phi_{1}(T)\times \Phi_{2}(T)$.}
.\\\\
We would like to stress also the fact that the example we provided in the above proof of Proposition 5 just involves a product of Drinfeld modules. T. Scanlon's proof of an analogue of Manin-Mumford conjecture for finite powers of a given Drinfeld module, given in \cite{Scanlon}, cannot be repeated in general for a product of different Drinfeld modules. We believe anyway that such an argument may still hold for a product of different Drinfeld modules such that these ones have the same degree.\\\\%The reason for which we took in this counter-example the hypothesis on the two involved Drinfeld modules not to be isogenous to each other is simple. In fact, assuming for example that $m=2$, if there exists $P(\tau)\in Hom_{\mathcal{F}}(\Phi_{1},\Phi_{2})\setminus\{0\}$ an isogeny between the two Drinfeld modules, $\Phi_{2}$ may be interpreted as a "$P(\tau)-$th root" of $\Phi_{1}\circ P(\tau)$. In other words, $\Phi_{1}(T)(\tau)\circ P(\tau)=P(\tau)\circ \Phi_{2}(T)(\tau)$ and the torsion points of $\Phi_{1}(T)\times \Phi_{2}(T)$ take all the shape $(P(\tau)(z),z)$, where $z$ is a torsion point with rapport to $\Phi_{2}$. The algebraic variety $X=P(\tau)(Y)$ which contains all the torsion points is then stabilized by the action of $\Phi_{1}(T)\times \Phi_{2}(T)$.\\\\
The study of the arithmetic of $T-$modules turns out to be therefore much more delicate than one could expect. Even taking count of the existence of non-trivial subrings of $A$ by extending the notion of torsion subvariety to all the possible sub-$B-$modules for every $B$ non-trivial subring of $A$, the structure of the $T-$modules still allows the construction of counter-examples to an analogue of a (even weaker) formulation of the Manin-Mumford conjecture as stated in Theorem 4 for abelian varieties defined over a number field. We propose then some new restrictions to our hypothesis in order to avoid counter-examples like the one described in Proposition 5.\\\\
We consider again a $T-$module which is a product of $m$ Drinfeld modules, non-isogeneous to each other. So we take $\mathcal{A}:=\mathbb{D}_{1}\times...\times \mathbb{D}_{m}=(\mathbb{G}_{a}^{m},\Phi)$. By Definition 1, we remark that if the coefficient matrix $a_{d}$ is invertible, all these Drinfeld modules necessarily have the same rank $d$. As one can easily see, this makes impossible to produce situations as the one showed in the proof of Proposition 5, for any possible choice of $\Phi$ and $q$.\\\\ %, which is impossible by the hypothesis for any choice of $\Phi$ or $q$. 
Finally, when we consider a $T-$module whose associated leading matrix is invertible, %which is a product of finitely many Drinfeld modules having the same rank, 
nothing seems to be an obstacle to a result of Manin-Mumford type. %In such situations the leading coefficient of this $T-$moduleis invertible. 
As such a direct hypothesis %of this coefficient 
would exclude a lot of good cases, as for example a tensor power of the Carlitz module, we ask more generally that, given an abelian uniformizable $T-$module $\mathcal{A}=(\mathbb{G}_{a}^{m},\Phi)$, \textbf{there exists a number $i\in \mathbb{N}\setminus\{0\}$ such that the leading coefficient matrix $a'_{id}$ of the $i-$th iterated $\Phi(T^{i})(\tau)$ of $\Phi(T)(\tau)$ is invertible}.\\\\
As it is easy to see, such a more general hypothesis on the $T-$module $\mathcal{A}$ does not change in any significant way our previous argument and still implies that if $\mathcal{A}$ is a product of Drinfeld modules, they have all to have the same rank. %As it is easy to see, this does not involve any change in the reasonment. 
On the other hand, we see for example that a tensor power of the Carlitz module respects such an hypothesis on the leading coefficient matrix. We can also remark that this hypothesis does actually imply already the condition on the $T-$module to be abelian, as we show in the following Theorem.%Si on considère en revanche un produit de $m$ modules de Drinfeld isogènes, comme ils ont même rang on obtient un $T-$module vérifiant la condition d'avoir coefficient dominant inversible. Il y a cette fois une relation entre les points de torsion des modules de Drinfeld. Supponsons pour simplifier $m = 2$. Notre $T-$module est alors le produit $\mathbb{D}_{1}\times \mathbb{D}_{2}$ de deux modules de Drinfeld $\mathbb{D}_{1}=(\mathbb{G}_{a},\Phi_{1})$ and $\mathbb{D}_{2}=(\mathbb{G}_{a},\Phi_{2})$ isogènes. Si $P(\tau)\in Hom_{\mathcal{F}}(\Phi_{1},\Phi_{2})\setminus\{0\}$ est une isogénie entre les deux modules de Drinfeld, $\Phi_{2}$ sera alors "`racine $P(\tau)-$ième"' de $\Phi_{1}$. Autrement dit, $\Phi_{1}(T)(\tau)\circ P(\tau)=P(\tau)\circ \Phi_{2}(T)(\tau)$ and les points de torsion de $\Phi_{1}(T)\times \Phi_{2}(T)$ contiennent tous ceux sous la forme $(P(\tau)(z),z)$, où $z$ est point de torsion par rapport à $\Phi_{2}$. La variété algébrique $X=P(\tau)(Y)$ qui contient tous ces points de torsion est alors facilement stabilisée par l'action de $\Phi_{1}(T)\times \Phi_{2}(T)$. Ce qui montre donc qu'un lien algébrique entre les points de torsion d'un $T-$module n'est pas forcément un obstacle à que celui-ci satisfasse cette première version de la Conjecture. Nous avons en plus la propriété suivante.
\begin{thm}
Let $\mathcal{A}=(\mathbb{G}_{a}^{m},\Phi)$ be a $T-$module defined over the field $\mathcal{F}\subset \ov{k}$, such that there exists a number $i\in \mathbb{N}\setminus\{0\}$ such that the leading coefficient of $\Phi(T^{i})\in \mathcal{F}^{m,m}\{\tau\}$ is an invertible matrix. Therefore, $\mathcal{A}$ is abelian. 
\end{thm}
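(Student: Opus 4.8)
The plan is to work directly with the defining module $M:=\mathrm{Hom}_{\mathcal{F}}(\mathcal{A},\mathbb{G}_{a})$ and to prove that it is finitely generated over $\mathcal{F}[T]$; this suffices, since a finitely generated module over the Noetherian domain $\mathcal{F}[T]$ has finite rank, so $\mathcal{A}$ is abelian in the sense of Definition 8. First I would make the module structure explicit. An $\mathbb{F}_{q}$-linear map $\mathbb{G}_{a}^{m}\to \mathbb{G}_{a}$ is given by a row vector $(f_{1},\dots,f_{m})$ with $f_{j}\in \mathcal{F}\{\tau\}$, acting by $\ov{x}\mapsto \sum_{j}f_{j}(x_{j})$, so that $M\simeq \mathcal{F}\{\tau\}^{1,m}$; here $\mathcal{F}$ acts on the left by scalars (postcomposition), while $T$ acts on the right by $u\mapsto u\,\Phi(T)$ (precomposition with $\Phi(T)\in \mathcal{F}^{m,m}\{\tau\}$, matrix multiplication in $\mathcal{F}\{\tau\}$). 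These two actions commute and endow $M$ with its $\mathcal{F}[T]$-module structure.

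The key reduction is to pass from $T$ to $T^{i}$. Since $\mathcal{F}[T^{i}]$ is a subring of $\mathcal{F}[T]$, any generating set of $M$ over $\mathcal{F}[T^{i}]$ is a fortiori a generating set over $\mathcal{F}[T]$; hence it suffices to prove that $M$ is finitely generated over $\mathcal{F}[T^{i}]$, where $T^{i}$ acts through $P:=\Phi(T^{i})\in \mathcal{F}^{m,m}\{\tau\}$. By hypothesis the leading coefficient $a'$ of $P$, of $\tau$-degree $D:=i\widetilde{d}$ with $\widetilde{d}$ the degree of $\mathcal{A}$, is an invertible matrix. This replaces the original data by a single twisted-polynomial matrix $P$ whose top coefficient is invertible, which is exactly the feature that drives the argument.

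Next I would filter $M$ by $\tau$-degree: let $W_{n}\subseteq M$ be the $\mathcal{F}$-span of the row vectors whose entries have $\tau$-degree $\le n$, so $\dim_{\mathcal{F}}W_{n}=m(n+1)$ and each graded piece $W_{n}/W_{n-1}$ is identified with $\mathcal{F}^{1,m}$ via the degree-$n$ coefficient. The heart of the proof is the computation of the action induced by $P$ on the associated graded. For a row vector with leading term $\ov{u}\,\tau^{n}$ one has $\ov{u}\,\tau^{n}\cdot a'\tau^{D}=\ov{u}\,(a')^{(q^{n})}\tau^{n+D}$, where $(a')^{(q^{n})}$ denotes the entrywise $q^{n}$-th power (Frobenius twist); thus right multiplication by $P$ induces $W_{n}/W_{n-1}\to W_{n+D}/W_{n+D-1}$, $\ov{u}\mapsto \ov{u}\,(a')^{(q^{n})}$. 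Since $\det\big((a')^{(q^{n})}\big)=(\det a')^{q^{n}}\neq 0$, invertibility of $a'$ is preserved under the twist, so each of these maps is an $\mathcal{F}$-linear isomorphism.

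Finally I would climb the filtration by strong induction on $n$ to show $W_{n}\subseteq \mathcal{F}[T^{i}]\cdot W_{D-1}$ for every $n$. For $n\le D-1$ this is trivial. For $n\ge D$ and $u\in W_{n}$ with leading term $\ov{u}\,\tau^{n}$, the element $\tilde{v}:=\ov{u}\,\big((a')^{(q^{n-D})}\big)^{-1}\tau^{n-D}$ lies in $W_{n-D}$ and satisfies $\tilde{v}\,P\equiv \ov{u}\,\tau^{n}\pmod{W_{n-1}}$; hence $u-\tilde{v}P\in W_{n-1}$, and by the induction hypothesis applied to $n-D$ and to $n-1$ (both $<n$) we get $u\in \mathcal{F}[T^{i}]\cdot W_{D-1}$. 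Therefore $M=\mathcal{F}[T^{i}]\cdot W_{D-1}$ is generated over $\mathcal{F}[T^{i}]$ by the finitely many row vectors $e_{j}\tau^{k}$ (with $e_{j}$ the $j$-th standard basis vector, $1\le j\le m$, $0\le k<D$), hence finitely generated over $\mathcal{F}[T]$, of finite rank, and $\mathcal{A}$ is abelian. The only genuinely delicate point is this graded computation together with the observation that the Frobenius twist preserves invertibility; the identification of $M$ and the reduction from $T$ to $T^{i}$ are routine.
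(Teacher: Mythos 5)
Your proof is correct and takes essentially the same route as the paper: the paper invokes the right division algorithm in the Ore algebra $\mathcal{F}\{\tau\}$ together with the invertibility of the leading coefficient of $\Phi(T^{i})$ to conclude that the residues modulo $\Phi(T^{i})$ span a finite-dimensional space of generators over $\mathcal{F}[T^{i}]\subseteq\mathcal{F}[T]$, and your filtration-plus-Frobenius-twist induction is exactly an explicit proof of that division step. (One cosmetic remark: the $\tau$-degree of $\Phi(T^{i})$ need not equal $i\widetilde{d}$ when the leading coefficient of $\Phi(T)$ is singular --- the paper writes it as $\widetilde{d}(i)$ for this reason --- but your argument only uses $D=\deg_{\tau}\Phi(T^{i})$ with invertible top coefficient, so nothing is affected.)
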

\begin{proof}
By the canonical isomorphism:\[Hom_{\mathcal{F}}(\mathcal{A},\mathbb{G}_{a})\simeq \mathcal{F}\{\tau\}^{m};\]we consider a morphism $f\in Hom_{\mathcal{F}}(\mathcal{A},\mathbb{G}_{a})$ as an element $f(\tau)\in \mathcal{F}\{\tau\}^{m}$. As the leading coefficient of $\Phi(T^{i})$ is invertible and the Ore algebra $\mathcal{F}\{\tau\}$ is a (non commutative) ring endowed of the right division algorithm (see \cite{Goss}, Proposition 1.6.2), it is possible to divide on right by $\Phi(T^{i})$ each element of $\mathcal{F}\{\tau\}^{m}$, knowing that the coefficients of such an object are vectors in $\mathcal{F}^{m}$ while those of $\Phi(T^{i})$ are matrices in $\mathcal{F}^{m,m}$. In fact, an invertible matrix in $\mathcal{F}^{m,m}$ also divide (on right) each element of $\mathcal{F}^{m}$, and this fact allows the euclidean division. The algebra $\mathcal{F}\{\tau\}^{m}$ could then be shared in $m\widetilde{d}(i)$ (where $\widetilde{d}(i)$ is the degree of $\Phi(T^{i})$ as an additive polynomial in $\tau$) division classes modulo $\Phi(T^{i})$, which is equivalent to say that $Hom_{\mathcal{F}}(\mathcal{A},\mathbb{G}_{a})\simeq \mathcal{F}\{\tau\}^{m}$ is generated by $m\widetilde{d}(i)$ elements as a $\mathcal{F}[T]-$module.
\end{proof}
We remark that the proof of the above Theorem 9 may be used as well to compute the rank of an abelian uniformizable $T-$module which respects the condition on its leading coefficient matrix that we discussed above. We show this in the following proposition.
\begin{prop}
Given the $T-$module $C^{\otimes 2}=(\mathbb{G}_{a}^{2},\Phi)$ as in Proposition 3, so that $q=2$, the $T^{2}-$module $(\mathbb{G}_{a}^{2},\Phi^{2})$ has rank $2$ and its sub-$T^{2}-$module $0\times \mathbb{G}_{a}$ has rank 1.
\end{prop}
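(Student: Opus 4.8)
The plan is to take the factorisation seriously as a module structure over the subring $\mathbb{F}_{2}[T^{2}]\subset A$: write $T':=T^{2}$ and $\Psi(T'):=\Phi(T^{2})$, view $(\mathbb{G}_{a}^{2},\Phi^{2})$ as a genuine $T'$-module, and then reduce both rank computations to the Euclidean-division count already carried out in the proof of Theorem 9. First I would rewrite the formula for $\Phi(T^{2})$ from Proposition 3 as an element of $\mathcal{F}^{2,2}\{\tau\}$. Since $q=2$ one has $X^{2}=\tau X$ and $Y^{2}=\tau Y$, so the displayed action becomes
\[\Psi(T')(\tau)=\begin{pmatrix}T^{2}&0\\0&T^{2}\end{pmatrix}+\begin{pmatrix}1&0\\T+T^{2}&1\end{pmatrix}\tau.\]
The point worth stressing is that the $\tau^{2}$-coefficient, which one might naively expect from composing two $\tau$-degree-one operators, vanishes identically in characteristic $2$; this cancellation is the special feature that makes everything work and is the single step most deserving of a careful check.

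Next I would read off that the differential is $d\Psi(T')=T'I_{2}$, so its nilpotent part is zero, and that the leading coefficient matrix in $\tau$ is the lower-triangular unipotent matrix above, invertible with determinant $1$. Hence $\Psi(T')$ already meets the hypothesis of Theorem 9 with $i=1$, so $(\mathbb{G}_{a}^{2},\Psi)$ is abelian, and rerunning the right-division argument of that proof over $\mathcal{F}[T']$ shows that $\mathrm{Hom}_{\mathcal{F}}(\mathcal{A},\mathbb{G}_{a})\simeq\mathcal{F}\{\tau\}^{2}$ is free of rank $m\,\widetilde{d}=2\cdot 1=2$, the factor $\widetilde{d}=1$ being the $\tau$-degree of $\Psi(T')$ displayed above. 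This establishes the first assertion, that $(\mathbb{G}_{a}^{2},\Phi^{2})$ has rank $2$.

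For the sub-$T^{2}$-module $0\times\mathbb{G}_{a}$ I would simply restrict the action to $X=0$: substituting $X=0$ into the second component of $\Phi(T^{2})$ annihilates the $(T+T^{2})X^{2}$ term and leaves $\Psi(T')(0,Y)=(0,\,T^{2}Y+Y^{2})=(0,\,T'Y+\tau Y)$, confirming in passing that $0\times\mathbb{G}_{a}$ is indeed $\Psi$-stable. Thus $(0\times\mathbb{G}_{a},\Psi)$ is precisely the Carlitz module over $\mathbb{F}_{2}[T']$, a $\tau$-degree-one Drinfeld module, and therefore has rank $1$; equivalently one reruns the count of Theorem 9 with $m=1$ and $\widetilde{d}=1$. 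Apart from the disappearance of the top $\tau$-term noted above, both rank computations are immediate applications of the machinery already assembled, so I do not anticipate any further obstacle.
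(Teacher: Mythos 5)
Your proposal is correct and follows essentially the same route as the paper: pass to the $t$-module $\Psi(t):=\Phi(T^{2})$ with $t=T^{2}$, observe that the $\tau^{2}$-term cancels so the leading coefficient matrix is the invertible unipotent matrix $\bigl(\begin{smallmatrix}1&0\\T+T^{2}&1\end{smallmatrix}\bigr)$, and apply the division count of Theorem 9 to get rank $m\widetilde{d}=2\cdot 1=2$, then rank $1$ for the one-dimensional sub-$t$-module $0\times\mathbb{G}_{a}$. Your explicit identification of the restriction to $X=0$ with the Carlitz module over $\mathbb{F}_{2}[t]$ is a small but welcome addition that the paper leaves implicit.
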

\begin{proof}
The tensor square $C^{\otimes 2}$ of a Carlitz module presented in the proof of Proposition 3 does not respect the condition on its leading coefficient matrix to be invertible, but we remarked in such a proof that it can be seen as well as a $t-$module, by calling $t:=T^{2}$. Now, the new $t-$module $(\mathbb{G}_{a}^{2},\Psi)$, where %we repeat the same construction of $\mathcal{C}$ by starting this time with the ring $\mathbb{F}_{2}[t]$ and the field $\mathbb{F}_{2}(t)$, and 
$\Psi(t)=\Phi(T^{2})$, so that:\[\Psi(t)(\tau)=\left(\begin{array}{cc}t&0\\0&t\end{array}\right)+\left(\begin{array}{cc}1&0\\(\sqrt{t}+t)&1\end{array}\right)\tau;\]respects all the required hypotheses being its leading matrix invertible. We see therefore by the proof of Theorem 9 that it has rank $2$ being its dimension $2$ and its degree $1$. Such a degree remains the same for $0\times \mathbb{G}_{a}$, which is a sub-$t-$module of dimension 1 of $(\mathbb{G}_{a}^{2},\Psi)$. Its rank will be therefore 1. %This example shows moreover that in general $\widetilde{d}(i)$ is not $i\widetilde{d}$, on the contrary of what one has in the Drinfeld modules setting.
\end{proof}
The methods we used in the proof of Theorem 9 also suggest an entire class of examples of nonabelian $T-$modules. We present a typical case of such $T-$modules in the following proposition. 
\begin{prop}
Let $q=2$. Let $\mathcal{A}=(\mathbb{G}_{a}^{2},\Phi)$ the $T-$module defined by the following action of $\Phi$:\[\Phi(T)(\tau)=TI_{2}+\left(\begin{array}{cc}0&0\\1&0\end{array}\right)\tau.\]Therefore, $\mathcal{A}$ is nonabelian.
\end{prop}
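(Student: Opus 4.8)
The plan is to study the $\mathcal{F}[T]-$module $M:=\mathrm{Hom}_{\mathcal{F}}(\mathcal{A},\mathbb{G}_{a})\simeq \mathcal{F}\{\tau\}^{2}$ already used in the proof of Theorem 9, where $\mathcal{F}$ is the coefficient field of $\Phi$, a homomorphism $f=(f_{1},f_{2})$ is recorded as a row vector of Ore polynomials, and $T$ acts by precomposition with $\Phi(T)$, that is, by right multiplication by the Ore matrix \[\Phi(T)=\begin{pmatrix}T&0\\ \tau&T\end{pmatrix}.\] By Definition 8 and the freeness statement cited after it (\cite{Goss}, Theorem 5.4.10), an abelian $T-$module has $M$ free of finite rank over $\mathcal{F}[T]$, in particular finitely generated; it therefore suffices to prove that for the present $\Phi$ the module $M$ is \emph{not} finitely generated over $\mathcal{F}[T]$. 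This is exactly the configuration excluded by Theorem 9: since $N=\left(\begin{smallmatrix}0&0\\1&0\end{smallmatrix}\right)$ satisfies $N^{2}=0$ and has entries in $\mathbb{F}_{q}$, the $\tau-$leading coefficient of every iterate $\Phi(T^{i})$ is a scalar multiple of $N$, hence never invertible, so the right-division argument of Theorem 9 is unavailable and we must instead produce an obstruction to finite generation by hand.

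The computation I would carry out rests on the Ore relation $\tau c=c^{q}\tau$ (with $q=2$). Applying it to the distinguished vectors gives \[T\cdot(\tau^{i},0)=(\tau^{i},0)\,\Phi(T)=(\tau^{i}T,0)=T^{q^{i}}(\tau^{i},0),\] so that $T$ acts on $(\tau^{i},0)$ simply as the scalar $T^{q^{i}}\in\mathcal{F}$. Consequently the subspace \[K:=\mathcal{F}\{\tau\}\times\{0\}\subset M\] is stable under right multiplication by $\Phi(T)$, hence is a sub-$\mathcal{F}[T]-$module, and on $K$ the operator $T$ preserves the filtration by $\tau-$degree: it carries each line $\mathcal{F}\cdot(\tau^{i},0)$ into itself.

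From this degree-preservation the conclusion follows quickly. The $\mathcal{F}[T]-$submodule generated by any finite family $g_{1},\dots,g_{n}\in K$ lies in the span of the $(\tau^{j},0)$ with $j\le D$, where $D$ is a common bound for the $\tau-$degrees of the $g_{r}$; as this span omits $(\tau^{D+1},0)$ it is a proper subspace of $K$, so no finite family generates $K$. Equivalently, $K$ is the direct sum of the lines $\mathcal{F}\cdot(\tau^{i},0)$, which are annihilated by the pairwise distinct linear polynomials $T-T^{q^{i}}$: thus $K$ is an infinite-dimensional torsion module with trivial annihilator, while every finitely generated torsion module over the principal ideal domain $\mathcal{F}[T]$ is killed by a single nonzero polynomial. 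Either way $K$ is not finitely generated, and since $\mathcal{F}[T]$ is Noetherian a submodule of a finitely generated module would itself be finitely generated; hence $M$ is not finitely generated and $\mathcal{A}$ is nonabelian.

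The only delicate point is to keep the module structure straight: the $T$ that acts through $\Phi$ must not be confused with the scalar $T\in\mathcal{F}$, and it is precisely the Ore twist $\tau T=T^{q}\tau$ that turns the $T-$action on the $\tau-$degree$-i$ part into multiplication by the scalar $T^{q^{i}}$. Once this is in place the argument is routine. I would also stress that here ``nonabelian'' has to be read as ``$M$ is not finitely generated over $\mathcal{F}[T]$'' rather than ``$M$ has infinite torsion-free rank'': in fact $M$ turns out to be a torsion $\mathcal{F}[T]-$module, so the failure of the abelian property is located entirely in finite generation, not in the rank.
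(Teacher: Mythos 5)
Your proof is correct, and it exploits the same underlying phenomenon as the paper --- the nilpotent leading coefficient prevents the $\tau$-degree from growing under the $\Phi(T)$-action --- but it packages the obstruction differently. The paper argues directly on all of $M\simeq\mathcal{F}\{\tau\}^{2}$: it computes $\ov{f}_{i}\circ\Phi(T^{j})$ for every iterate, observes that every iterate has $\tau$-degree $1$, and bounds the $\tau$-degree of an arbitrary combination $a_{1}(T)\cdot\ov{f}_{1}+\cdots+a_{r}(T)\cdot\ov{f}_{r}$ by $\max_{i}\{\deg_{\tau}(f_{i,X}),\deg_{\tau}(f_{i,Y})+1\}$, contradicting the unboundedness of degrees in $\mathcal{F}\{\tau\}^{2}$. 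You instead isolate the sub-$\mathcal{F}[T]$-module $K=\mathcal{F}\{\tau\}\times\{0\}$, on which the action is exactly diagonal (your computation $T\cdot(\tau^{i},0)=(\tau^{i}T,0)=T^{q^{i}}(\tau^{i},0)$ is correct under the row-vector/right-multiplication convention, which is the one consistent with $f\mapsto f\circ\Phi(T)$), and then conclude either by degree preservation or by the structure theory of torsion modules over the PID $\mathcal{F}[T]$, finishing with Noetherianity to pass from $K$ to $M$. This buys you a cleaner and more structural statement ($K$ is an infinite direct sum of eigenlines with pairwise distinct annihilators $T-T^{q^{i}}$, hence visibly not finitely generated) at the cost of invoking Noetherianity of $\mathcal{F}[T]$, whereas the paper's argument is self-contained but requires tracking the degree of every coordinate of every combination; both correctly read ``nonabelian'' as failure of finite generation of $Hom_{\mathcal{F}}(\mathcal{A},\mathbb{G}_{a})$, which is how the paper's own proof uses the definition.
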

\begin{proof}
We see that:\[\Phi(T^{2})(\tau)=T^{2}I_{2}+\left(\begin{array}{cc}0&0\\T+T^{2}&0\end{array}\right)\tau.\]More specifically, the form of the leading coefficient matrix (which is nilpotent of order 2) is such that the degree in $\tau$ of $\Phi(T^{j})(\tau)$ is 1 for every $j\in \mathbb{N}\setminus\{0\}$. %Indeed, such a linear form would be in principle of degree 2 in $\tau$, but the coefficient matrix of $\tau^{2}$ will always be of the form:\[\left(\begin{array}{cc}0&0\\1&0\end{array}\right)\left(\begin{array}{cc}0&0\\a(T)&0\end{array}\right);\]for some $a(T)\in A$. Such a product is clearly the zero matrix. 
Indeed, if one carries on taking iterates of $\Phi(T)(\tau)$ it is easy to see that for every $j\in \mathbb{N}\setminus\{0\}$ they will take the following shape:\[\Phi(T^{j})(\tau)=T^{j}I_{2}+\left(\begin{array}{cc}0&0\\b_{j}(T)&0\end{array}\right)\tau;\]for some $b_{j}(T)\in A$. In other words, for each $j\in \mathbb{N}\setminus\{0\}$ the additive form $\Phi(T^{j})(\tau)$ will always have degree $1$ in $\tau$. As we know that:\[Hom_{k}(\mathcal{A},\mathbb{G}_{a})\simeq k\{\tau\}^{2};\]if $\mathcal{A}$ was abelian there would be a finite set of generating elements $\ov{f}_{1}(\tau), ..., \ov{f}_{r}(\tau)$ of $k\{\tau\}^{2}$ such that for each $\ov{f}(\tau)\in k\{\tau\}^{2}$, there would exist $a_{1}(T), ..., a_{2}(T)\in k[T]$ such that:\[\ov{f}(\tau)=a_{1}(T)\cdot\ov{f}_{1}(\tau)+\cdots +a_{r}(T)\cdot\ov{f}_{r}(\tau);\]where the action:\[h(T)\cdot \ov{g}(\tau);\]for $h(T)=h_{0}+h_{1}T+...+h_{s}T^{s}\in k[T]$ and $\ov{g}(\tau)\in k\{\tau\}^{2}$ comes from the following one:\[h_{i}T^{i}\cdot \ov{g}(\tau):=h_{i}\ov{g}(\tau)\circ \Phi(T^{i});\]as described in \cite{Goss}, page 146. By %calling $\alpha_{1}, ..., \alpha_{r}$ the degree in $T$, respectively, of $a_{1}(T), ..., a_{r}(T)$ and by 
fixing the notation, for $i=1, ..., r$:\[\ov{f}_{i}(\tau)=\left(\begin{array}{c}f_{i,X}(\tau)\\f_{i,Y}(\tau)\end{array}\right);\]one sees that, for each $j\in \mathbb{N}\setminus\{0\}$:%\[\ov{f}_{i}(\tau)\circ \Phi(T)=\left(\begin{array}{c}Tf_{i,X}(\tau)+f_{i,Y}(\tau)\tau\\Tf_{i,Y}(\tau)\end{array}\right).\]Moreover:
\[\ov{f}_{i}(\tau)\circ \Phi(T^{j})=\left(\begin{array}{c}T^{j}f_{i,X}(\tau)+b_{j}(T)f_{i,Y}(\tau)\tau\\T^{j}f_{i,Y}(\tau)\end{array}\right).\]Therefore, as the multiplication by the coefficients of $a_{1}(T), ..., a_{r}(T)$ (which are in $k$) does not change the degree in $\tau$, it is now possible to see that the degree in $\tau$ of the expression $a_{1}(T)\cdot\ov{f}_{1}(\tau)+\cdots +a_{r}(T)\cdot\ov{f}_{r}(\tau)$ cannot exceed the value:\[\max_{i=1, ..., r}\{\deg_{\tau}(f_{i,X}(\tau)),\deg_{\tau}(f_{i,Y}(\tau))+1\}.\]This clearly contradicts the arbitrary choice of $\ov{f}(\tau)$ and it is a direct consequence of the bounded degree in $\tau$ of all the possible iterates of $\Phi(T)(\tau)$. %We now see that for every $\ov{f}(\tau)$ chosen so that its first component $f_{X}(\tau)$ has degree in $\tau$ bigger than $\max_{i=1, ..., r}\{\deg_{\tau}(f_{i,X}(\tau)\}$, such an equality is impossible. 
Therefore, $\mathcal{A}$ cannot be abelian as a $T-$module.
\end{proof}
The example we gave in Proposition 7 might be extended to the general class of all $T-$modules $\mathcal{A}=(\mathbb{G}_{a}^{m},\Phi)$ such that there exists a positive integer $N_{\mathcal{A}}>0$ such that for each $i\in \mathbb{N}\setminus\{0\}$ one has that the degree in $\tau$ of $\Phi(T^{i})(\tau)$ is at most $N_{\mathcal{A}}$. This implies the following corollary.
\begin{cor}
A $T-$module $\mathcal{A}=(\mathbb{G}_{a}^{m},\Phi)$ such that there exists $M_{\mathcal{A}}\in \mathbb{N}\setminus\{0\}$ such that for each $i\in \mathbb{N}\setminus\{0\}$ the leading coefficient matrix of the additive form $\Phi(T^{i})(\tau)$ is nilpotent of order $\leq M_{\mathcal{A}}$, is nonabelian.
\end{cor}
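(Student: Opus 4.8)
The plan is to reduce the statement to the bounded-degree criterion sketched just before the corollary, and then to extract that criterion from the nilpotency hypothesis. Recall that, by the very degree count used in the proof of Proposition 7, if there is a uniform bound $N_{\mathcal A}$ with $\deg_{\tau}\Phi(T^{i})(\tau)\le N_{\mathcal A}$ for every $i\in\mathbb N\setminus\{0\}$, then $\mathcal A$ is nonabelian: finitely many generators $\ov{f}_{1}(\tau),\dots,\ov{f}_{r}(\tau)$ of $Hom_{\mathcal F}(\mathcal A,\mathbb G_{a})\simeq\mathcal F\{\tau\}^{m}$ over $\mathcal F[T]$ would produce, under the action $h(T)\cdot\ov{g}(\tau)=\sum_{i}h_{i}\,\ov{g}(\tau)\circ\Phi(T^{i})$, only elements of $\tau$-degree at most $\max_{j}\deg_{\tau}\ov{f}_{j}(\tau)+N_{\mathcal A}$, and these cannot exhaust $\mathcal F\{\tau\}^{m}$. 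So it suffices to show that the hypothesis forces the sequence $\deg_{\tau}\Phi(T^{i})(\tau)$ to be bounded, and then to invoke this generalization of Proposition 7.

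To do this I would study how the $\tau$-degree evolves along $\Phi(T^{i+1})=\Phi(T)\circ\Phi(T^{i})$ inside the Ore ring $\mathcal F^{m,m}\{\tau\}$. Writing $L_{i}\in\mathcal F^{m,m}$ for the leading coefficient of $\Phi(T^{i})(\tau)$ and $d_{i}:=\deg_{\tau}\Phi(T^{i})(\tau)$, the top term of the product is $a_{\widetilde d}\,L_{i}^{(\widetilde d)}\,\tau^{\widetilde d+d_{i}}$, where $(\cdot)^{(\widetilde d)}$ denotes the entrywise $q^{\widetilde d}$-power. The key structural remark is that in characteristic $p$ this twist is a ring endomorphism of $\mathcal F^{m,m}$, hence sends nilpotent matrices to nilpotent matrices of the same index. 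The hypothesis says every $L_{i}$ is nilpotent, so $\mathrm{rank}(L_{i})\le m-1$, and whenever the degree grows by the maximal amount $\widetilde d$ one has $L_{i+1}=a_{\widetilde d}\,L_{i}^{(\widetilde d)}$, giving $\mathrm{rank}(L_{i+1})\le\mathrm{rank}(L_{i})$. The ranks thus form a non-increasing sequence of non-negative integers, so they stabilise; I would then argue that the stable value must be $0$, so that the leading term eventually cancels and the degree can increase only finitely often. In the clean case of Frobenius-fixed coefficients this is transparent, since then $L_{i}=a_{\widetilde d}^{\,i}$ vanishes once $i$ reaches the nilpotency index of $a_{\widetilde d}$, which is $\le M_{\mathcal A}$, recovering exactly the mechanism of Proposition 7 and yielding the explicit bound $N_{\mathcal A}\approx(M_{\mathcal A}-1)\widetilde d$.

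The main obstacle is precisely this degree estimate in full generality. Two points require care: the twists $L_{i}^{(\widetilde d)}$ need not coincide with $L_{i}$, so one cannot simply write $L_{i}=a_{\widetilde d}^{\,i}$; and a product of nilpotent matrices need not be nilpotent, so the genuine content of the hypothesis is that $L_{i}$ is assumed nilpotent for \emph{every} $i$, not merely for $i=1$ (one checks that with $a_{\widetilde d}$ nilpotent but a successive product not nilpotent, the degree does grow). Feeding this assumption into an image/kernel descent to rule out a stable positive rank, and handling the non-maximal-growth steps where the new leading coefficient is a subleading coefficient of the product, is the delicate part. I would also stress the contrast with Theorem 9: there invertibility of a single leading coefficient forces abelianness via right division, whereas here nilpotency of all the leading coefficients prevents the $\tau$-degree from ever growing large enough to divide, which is exactly what the uniform boundedness of $\deg_{\tau}\Phi(T^{i})(\tau)$ records.
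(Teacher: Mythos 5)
Your first paragraph is precisely the paper's route: the corollary is presented there as an immediate consequence of the remark preceding it, namely that the degree-counting argument of Proposition 7 applies verbatim to any $T-$module for which $\deg_{\tau}\Phi(T^{i})(\tau)$ is uniformly bounded, and your bound $\max_{j}\deg_{\tau}\ov{f}_{j}(\tau)+N_{\mathcal{A}}$ is the correct generalization of the bound $\max_{i}\{\deg_{\tau}(f_{i,X}(\tau)),\deg_{\tau}(f_{i,Y}(\tau))+1\}$ appearing in that proof. Up to that point your argument is complete and faithful to the text. (A side remark: since any nilpotent matrix in $\mathcal{F}^{m,m}$ has nilpotency order at most $m$, the bound $M_{\mathcal{A}}$ in the hypothesis is automatic and carries no information beyond nilpotency itself.)

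The genuine gap is the one you yourself flag: nothing in your second and third paragraphs proves that nilpotency of all the leading coefficients $L_{i}$ forces $\deg_{\tau}\Phi(T^{i})(\tau)$ to be bounded. The rank argument only yields a non-increasing sequence of ranks and you give no mechanism excluding stabilization at a positive value; worse, at a step where $a_{\widetilde{d}}L_{i}^{(\widetilde{d})}=0$ the new leading coefficient is a lower-order coefficient of the Ore product, so the recursion $L_{i+1}=a_{\widetilde{d}}L_{i}^{(\widetilde{d})}$ on which the whole descent rests is not available, while the degree may still grow (by less than $\widetilde{d}$) at such a step. So the reduction from the nilpotency hypothesis to bounded $\tau-$degree is asserted, not proved. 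You should know that the paper does not prove it either: its entire justification for the corollary is the sentence extending Proposition 7 to the bounded-degree class, followed by the claim that this ``implies'' the corollary; the implication from nilpotent leading coefficients to bounded degree is left implicit. You have therefore correctly isolated the one step that carries all the content, and neither your proposal nor the paper supplies it. Either a proof that the prefix products $a_{\widetilde{d}}\,a_{\widetilde{d}}^{(\widetilde{d})}\cdots a_{\widetilde{d}}^{((i-1)\widetilde{d})}$, together with the lower-order corrections arising at degenerate steps, must eventually vanish under the hypothesis, or a counterexample for $m\geq 3$, would settle a point the paper leaves open.
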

%Proposition 7 suggests a way to produce examples of nonabelian $T-$modules of general dimension. 
%\begin{cor}
%A $T-$module $\mathcal{A}=(\mathbb{G}_{a}^{m},\Phi)$, for some $m>1$, is nonabelian if there exists a fixed number $N>0$ such that for every $j\in \mathbb{N}\setminus\{0\}$ the degree in $\tau$ of $\Phi(T^{j})(\tau)$ is less than $N$ and the leading coefficient matrix of $\Phi(T^{j})(\tau)$ is not invertible.
%\end{cor}
We state then a $T-$modules version of Manin-Mumford conjecture in the spirit of U. Zannier's and J. Pila's weak formulation as follows.
\begin{conj}
Let $\mathcal{A}=(\mathbb{G}_{a}^{m},\Phi)$ be a uniformizable $T-$module of dimension $m>1$ such that there exists $i\in \mathbb{N}\setminus\{0\}$ such that the leading coefficient matrix of $\Phi(T^{i})$ is invertible. %There exists then $j(\mathcal{A})\in \mathbb{N}\setminus\{0\}$ such that we have the following property. 
Let $X$ be a non-trivial irreducible algebraic sub-variety of $\mathcal{A}$, defined over $\ov{k}$. If $X$ does not contain torsion subvarieties of dimension $>0$, $X$ contains at most finitely many torsion points of $\mathcal{A}$.
\end{conj}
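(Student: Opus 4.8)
The plan is to adapt the Pila--Zannier strategy of \cite{PZ}, exploiting the fact that, under our hypotheses, $\mathcal{A}$ is abelian (by Theorem 9) and uniformizable by assumption, so that its torsion is faithfully described by the exponential map $\ov{e}:Lie(\mathcal{A})\to \mathcal{A}$ and the associated finite-rank lattice $\Lambda$. Fixing the normalisation of the discussion following Remark 1, I would first identify $\mathcal{A}_{tors.}$ with the set of $k/A$-rational points $(k/A)^{d}\times \ov{0}$ sitting inside the torsion part $(k_{\infty}/A)^{d}$ of $Lie(\mathcal{A})$, and set $\widetilde{X}:=\ov{e}^{-1}(X)$. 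Using Theorem 7 I would work throughout with sub-$T^{j(\mathcal{A})}$-modules, so that the subgroups eventually produced are automatically the ones entering the definition of torsion subvariety; the invertibility hypothesis on the leading matrix of some $\Phi(T^{i})$ is precisely what guarantees, as in the discussion preceding the statement, that the degenerate product-of-different-rank phenomenon of Proposition 5 cannot occur.

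The heart of the argument rests on two quantitative inputs. On the arithmetic side I would establish a \emph{Galois lower bound}: a torsion point annihilated by some $a(T)\in A$ of degree $n$ has, over the field of definition of $\mathcal{A}$, a Galois orbit of cardinality $\gg q^{c\,n}$ for a constant $c>0$, in the spirit of the cyclotomic-type estimates for Carlitz and Drinfeld torsion used by Denis and Scanlon; since $X$ is defined over $\ov{k}$, this whole orbit remains a set of torsion points lying on $X$. On the counting side I would prove an analogue of the Pila--Wilkie theorem, bounding by $O(H^{\varepsilon})$ the number of $k/A$-rational points of height at most $H$ on the transcendental part of $\widetilde{X}$, where the height of a torsion point is measured by the degree of its annihilator. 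Here the $\mathbb{F}_{q}$-linearity of $\ov{e}$ is a decisive structural simplification with respect to the archimedean case, since $\widetilde{X}$ inherits strong rigidity from the entireness and additivity of $\ov{e}$.

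Confronting these two bounds yields the usual dichotomy: if $X$ contained infinitely many torsion points, one could select a point of arbitrarily large level $n$, and its $\gg q^{c\,n}$ conjugates would supply more rational points of bounded height on $\widetilde{X}$ than the counting bound $O(H^{\varepsilon})$ permits, unless $\widetilde{X}$ contains a positive-dimensional semi-algebraic component. To close the argument I would then invoke a functional-transcendence (Ax--Lindemann type) statement for $T$-modules, asserting that such a component must be a $\mathcal{C}$-linear subspace $V\subseteq Lie(\mathcal{A})$ whose image $\ov{e}(V)$ is a translate of a sub-$T^{j(\mathcal{A})}$-module; combined with Theorem 7, with Proposition 2, and with the density of torsion on $\ov{e}(V)$ secured by the invertibility hypothesis, this exhibits a positive-dimensional torsion subvariety inside $X$, contradicting the hypothesis and proving the conjecture.

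The main obstacle is unmistakably the counting step. The Pila--Zannier method rests on o-minimality of the real exponential structure and on the Pila--Wilkie counting theorem, neither of which is available over the non-archimedean, positive-characteristic field $\mathcal{C}$. A substitute counting theorem must therefore be developed --- presumably within rigid or Berkovich analytic geometry, or through an appropriate model-theoretic tameness tailored to $\mathbb{F}_{q}$-linear entire functions --- and paired with a genuine Ax--Lindemann theorem for the exponential of an abelian uniformizable $T$-module, of which only fragmentary transcendence results such as those of \cite{Yu} are presently known. The $\mathbb{F}_{q}$-linearity of $\ov{e}$ should render the functional-transcendence side more tractable than in the classical case, but the interaction of the Frobenius $\tau$ with height and with counting is exactly where genuinely new ideas will be required.
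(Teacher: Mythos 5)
The statement you have been asked to prove is Conjecture 1 of the paper: the author does not prove it, explicitly leaves it open, and only remarks that the intended strategy is that of Pila--Zannier, deferring to \cite{D}, chapter 2, for a discussion. Your proposal reproduces essentially that same announced strategy (exponential uniformization, identification of $\mathcal{A}_{tors.}$ with $(k/A)^{d}\times\ov{0}$, Galois lower bound versus point-counting upper bound, functional transcendence to handle the positive-dimensional locus), but it is a research programme rather than a proof, and you say so yourself. The two pillars on which the concluding dichotomy rests are both unestablished: (i) there is no analogue of the Pila--Wilkie counting theorem over the non-archimedean, positive-characteristic field $\mathcal{C}$ --- o-minimality and the real subanalytic machinery simply do not transfer, and no substitute in rigid or Berkovich geometry is provided; (ii) there is no Ax--Lindemann-type theorem identifying the positive-dimensional ``algebraic part'' of $\ov{e}^{-1}(X)$ with translates of tangent spaces of sub-$T^{j(\mathcal{A})}$-modules for a general abelian uniformizable $T$-module. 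Without these, the step ``more conjugates than the counting bound permits, unless $\widetilde{X}$ contains a semi-algebraic component whose image is a torsion subvariety'' is not an argument but a wish.

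Two further points deserve attention. First, the Galois lower bound $\gg q^{cn}$ for the orbit of a torsion point of level $n$ is available (after Denis, David, and others) for Drinfeld modules, but for a general $T$-module satisfying only the invertible-leading-matrix hypothesis it is itself an open quantitative problem; you cannot simply cite it ``in the spirit of'' the Drinfeld case. Second, even granting all the analytic inputs, the final step must produce a torsion subvariety in the sense of Definition 11, i.e.\ a translate by a torsion point of a sub-$B$-module for some subring $B\subseteq A$; Theorem 7 reduces this to sub-$T^{j(\mathcal{A})}$-modules, as you note, but one still has to show that the torsion points accumulating on $X$ actually lie densely on a single such translate (the paper's Propositions 3 and 5 exist precisely to show how delicate this is). In short: your outline is faithful to the author's intended approach, but the conjecture remains unproved by it, and the genuinely new ingredients you would need --- the non-archimedean counting theorem and the functional transcendence statement --- are exactly the ones the paper also lacks.
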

Our strategy to prove Conjecture 1 is based on the techniques developped by U. Zannier and J. Pila in \cite{PZ}. We refer to \cite{D}, chapter 2, for a complete discussion. Roughly speaking, we essentially work on the decomposition briefly described after Remark 1 of the tangent space $Lie(\mathcal{A})$ of an abelian uniformizable $T-$module $\mathcal{A}$ in its torsion part and in its free part. This allows to translate the computation of the torsion points of $\mathcal{A}$ contained in some algebraic subvariety $X$ of $\mathcal{A}$ into a Diophantine Geometry problem of finding out the $k-$rational points of the torsion part of the analytic set $Y:=\ov{e}^{-1}(X)$ contained in $Lie(\mathcal{A})$.\\\\
Always considering such a particular class of $T-$modules which present so many relevant analogies with classical abelian varieties we might propose the analogue of the stronger version of Manin-Mumford conjecture as follows. 
\begin{conj}
Let $\mathcal{A}=(\mathbb{G}_{a}^{m},\Phi)$ be a uniformizable $T-$module of dimension $m>1$ such that there exists $i\in \mathbb{N}\setminus\{0\}$ such that the leading coefficient matrix of $\Phi(T^{i})$ is invertible. Let $X$ be a non-trivial irreducible algebraic subvariety of $\mathcal{A}$, defined over $\ov{k}$. If $X$ contains a Zariski-dense set of torsion points of $\mathcal{A}$, then $X$ is a torsion subvariety of $\mathcal{A}$.
\end{conj}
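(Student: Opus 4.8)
The plan is to transport the problem through the exponential map $\ov{e}:Lie(\mathcal{A})\to\mathcal{A}$ and to run the Pila--Zannier strategy inside the torsion part of $Lie(\mathcal{A})$, as sketched after Conjecture 1. First I would normalise the lattice as in the discussion following Remark 1: since $\mathcal{A}$ is uniformizable of rank $d$ (and abelian, by Theorem 9, because some $\Phi(T^{i})$ has invertible leading matrix), after the automorphism $\phi$ one may assume $\Lambda=Ker(\ov{e})=A^{d}$ inside the torsion part $(k_{\infty}/A)^{d}$ of $Lie(\mathcal{A})$, with $\ov{e}$ inducing a bijection between $\mathcal{A}_{tors.}$ and $(k/A)^{d}\times\ov{0}$. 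Writing $Y:=\ov{e}^{-1}(X)$, the hypothesis that the torsion of $X$ is Zariski-dense becomes the statement that the $(k/A)$-rational points of the torsion part lying on $Y$ are abundant. The target is to show that this abundance forces $Y$ to contain positive-dimensional $k$-rational (indeed $\mathbb{F}_{q}(a(T))$-rational) linear pieces, and that these descend under $\ov{e}$ to cosets of sub-$T^{j(\mathcal{A})}$-modules.

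The heart of the proof is a confrontation of two estimates attached to a torsion point $\ov{x}\in X$ annihilated by an irreducible $a(T)$ of degree $n$, to which I attach the height $H:=|a(T)|=q^{n}$. On the counting side I would establish a Pila--Wilkie-type upper bound: outside a finite union of positive-dimensional flat pieces, the number of $(k/A)$-rational points of $Y$ of height at most $H$ grows sub-polynomially, i.e. is $\ll_{\varepsilon}H^{\varepsilon}$ for every $\varepsilon>0$. Here the invertibility of the leading matrix of some $\Phi(T^{i})$, together with the fact that $\ov{e}$ is $\mathcal{F}$-entire and a local homeomorphism, is meant to equip $Y$ with the rigidity needed for such a count. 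On the arithmetic side I would prove a Galois lower bound: the number of $\mathcal{F}$-conjugates of $\ov{x}$ is $\gg H^{\delta}$ for some fixed $\delta>0$, because the absolute Galois group acts with large orbits on the $a(T)$-division module of $\mathcal{A}$; all these conjugates are again torsion points of $X$, hence $(k/A)$-rational points of $Y$ of the same height.

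Comparing the two estimates, for $H$ large the lower bound $H^{\delta}$ overtakes the upper bound $H^{\varepsilon}$ (choosing $\varepsilon<\delta$), which is impossible unless the Zariski-dense torsion is supported on the excluded flat pieces of $Y$. Each such piece is a translate of a $\mathcal{C}$-linear subspace $V\subseteq Lie(\mathcal{A})$ that is rational for the lattice $A^{d}$, hence stabilised by $d\Phi(T^{j(\mathcal{A})})=T^{j(\mathcal{A})}I_{m}$; the functional equation $\ov{e}(d\Phi(T)\ov{z})=\Phi(T)(\ov{e}(\ov{z}))$ then shows that $\ov{e}(V)$ is stable under $\Phi(T^{j(\mathcal{A})})$, so its Zariski closure is a sub-$T^{j(\mathcal{A})}$-module $\mathcal{B}$ in the sense of Definition 10, and Theorem 7 guarantees that the single index $j(\mathcal{A})$ handles every sub-$B$-module at once (in the product-of-Drinfeld-modules case one may instead invoke Thiery's Theorem 8 and Proposition 4 to take $j(\mathcal{A})=1$). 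The image of each flat piece is thus a coset $\ov{x}+\mathcal{B}$ with $\ov{x}\in\mathcal{A}_{tors.}$, a torsion subvariety. Since $X$ is irreducible and its torsion is Zariski-dense, the finitely many such cosets contained in $X$ cannot all be proper: one of them has dimension $\dim X$ and therefore equals $X$, so $X$ is itself a torsion subvariety.

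The step I expect to be the main obstacle is the Pila--Wilkie-type upper bound. Classically this rests on the o-minimality of the real exponential and on the Pila--Wilkie counting theorem, but over the non-archimedean field $\mathcal{C}$ there is no ready-made o-minimal structure in which $Y=\ov{e}^{-1}(X)$ is definable, so a usable counting theorem must be produced by other means --- plausibly through rigid-analytic or Berkovich-geometric techniques, or through a tailored non-archimedean analogue of o-minimality. This is exactly where the hypothesis that some $\Phi(T^{i})$ has invertible leading matrix (which already forces abelianness by Theorem 9 and excludes the pathological products of Drinfeld modules of Proposition 5) is expected to be indispensable. A secondary but still delicate difficulty is establishing the Galois lower bound in the required uniformity for a wholly general abelian uniformizable $\mathcal{A}$, rather than for a power of a single Drinfeld module, where Scanlon's model-theoretic methods already apply.
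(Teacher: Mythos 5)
This statement is Conjecture 2 of the paper: the author does not prove it, and explicitly presents the Pila--Zannier strategy only as a programme (``Our strategy to prove Conjecture 1 is based on the techniques developped by U. Zannier and J. Pila... We refer to \cite{D}, chapter 2''). Your outline faithfully reproduces that announced programme --- the decomposition of $Lie(\mathcal{A})$ into torsion and free parts, the translation of torsion points of $X$ into $k$-rational points of $Y=\ov{e}^{-1}(X)$, and the confrontation of a counting upper bound with a Galois lower bound --- but it is not a proof, and you say so yourself. The two load-bearing ingredients are both left unestablished: there is no non-archimedean analogue of the Pila--Wilkie theorem applicable to $\ov{e}^{-1}(X)$ over $\mathcal{C}$ (no o-minimal structure is available, and no substitute is constructed), and no Galois lower bound of the form $\gg H^{\delta}$ is proved for the torsion of a general abelian uniformizable $T$-module (the known results of Denis and Scanlon cover only powers of a single Drinfeld module). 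A proof sketch whose two central estimates are both conjectural is a restatement of the problem, not a solution.

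Two further points deserve attention even at the level of the sketch. First, the hypothesis that some $\Phi(T^{i})$ has invertible leading coefficient matrix appears in your argument only as something ``expected to be indispensable''; it is never actually used in a deduction. The paper shows (Proposition 5) that without it the conclusion is simply false, so any genuine proof must invoke this hypothesis at a precise mathematical step, and identifying that step is part of the open problem. Second, your final passage from ``the torsion is supported on finitely many flat pieces'' to ``$X$ equals one coset $\ov{x}+\mathcal{B}$'' presumes that the images of the flat pieces are Zariski-closed translates of sub-$T^{j(\mathcal{A})}$-modules; but $\ov{e}(V)$ need not be closed and Proposition 2 gives only $\ov{e}(Lie(\mathcal{B}))\subseteq\mathcal{B}$, so the identification of the Zariski closure of $\ov{e}(V)$ with a sub-$B$-module in the sense of Definition 11 is itself a nontrivial structural claim (compare Thiery's Theorem 8, which the paper can invoke only for powers of Drinfeld modules). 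In short: your plan is the right plan, and it is the author's plan, but the statement remains a conjecture because the plan's key lemmas are not theorems.
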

We conclude by remarking that our examples show not only that our Formulation 1 and Formulation 2 are false, but also that L. Denis' Statement 1 cannot be true either, as it would obviously imply Formulation 1 and Formulation 2. We would like therefore to propose a modification of Statement 1 into the following unified conjecture. 
\begin{conj}
Let $\mathcal{A}=(\mathbb{G}_{a}^{m},\Phi)$ be a uniformizable $T-$module of dimension $m$ such that there exists $i\in \mathbb{N}\setminus\{0\}$ such that the leading coefficient matrix of $\Phi(T^{i})$ is invertible. Let $X$ be an algebraic subvariety of $\mathbb{G}_{a}^{m}$ , $\Gamma$ be a finitely generated subgroup of $\mathcal{A}(\ov{k})$ and $\ov{\Gamma}$ be like in Statement 1. Therefore, there exist finitely many torsion subvarieties $\ov{\gamma}_{1}+\mathcal{B}_{1}, ..., \ov{\gamma}_{s}+\mathcal{B}_{s}$ of $\mathcal{A}$ such that:\[X\cap \ov{\Gamma}=\bigcup_{1\leq i\leq s}(\ov{\gamma}_{i}+\mathcal{B}_{i}\cap \ov{\Gamma}).\]
\end{conj}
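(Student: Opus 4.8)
The plan is to adapt the Pila--Zannier strategy, already sketched after Conjecture 1, to the unified setting, separating the Manin--Mumford component of $\ov{\Gamma}$ (its torsion part, governed by Conjecture 2) from the Mordell--Lang component (the divisible hull of the finitely generated $\Gamma$). First I would fix an iterate $\Phi(T^{i})$ whose leading matrix is invertible; by Theorem 9 this forces $\mathcal{A}$ to be abelian, and combined with uniformizability it grants the decomposition recalled after Remark 1, namely $Lie(\mathcal{A})\simeq(k_{\infty}/A)^{d}\oplus Free_{k_{\infty}}$ with the period lattice $\Lambda$ carried to $A^{d}$. Pulling $X$ back through the exponential map, I set $Y:=\ov{e}^{-1}(X)\subset Lie(\mathcal{A})$, an analytic subset, and I translate ``torsion point of $\mathcal{A}$ lying on $X$'' into ``$k$-rational point of the torsion factor $(k/A)^{d}\times\ov{0}$ lying on $Y$'', exactly as in the passage following Remark 1.

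The core of the argument would be a counting dichotomy for these rational points. On one side I would seek a function-field analogue of the Pila--Wilkie theorem: an upper bound, polynomial in the height, for the number of $k$-rational points of bounded denominator lying on the genuinely transcendental part of $Y$ (the part not contained in an $A$-rational affine subspace of the torsion factor). On the other side I would prove a Galois lower bound: the number of conjugates over the field of definition of a torsion point annihilated by $a(T)$ grows at least like a fixed positive power of $q^{\deg a}$, using the Galois action on the $\Phi(a(T))$-torsion of $\mathcal{A}$. Comparing the two, an infinite supply of torsion points on $X$ would force a positive-dimensional $A$-rational affine piece inside $Y$; by Theorem 6 such a piece is in fact $T^{j(\mathcal{A})}$-rational, and by the extension of Thiery's correspondence (Theorem 8) invoked in the discussion after Lemma 1 its image under $\ov{e}$ is a sub-$T^{j(\mathcal{A})}$-module, hence a torsion subvariety inside $X$. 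Peeling off this piece and arguing by noetherian induction on the residual variety would produce the finitely many translates $\ov{\gamma}_{i}+\mathcal{B}_{i}$.

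To promote the statement from pure torsion to the full $\ov{\Gamma}$, I would let the generators of $\Gamma$ lift through $\ov{e}$ to vectors spanning a finitely generated $A$-submodule of $Lie(\mathcal{A})$ transverse to $\Lambda$, so that points of $\ov{\Gamma}$ correspond to points of $Y$ lying in the $k$-span of these enlarged ``rational'' directions with controlled height; the same counting-versus-Galois comparison then yields the translates of sub-$T^{j(\mathcal{A})}$-modules whose intersection with $\ov{\Gamma}$ exhausts $X\cap\ov{\Gamma}$. The invertibility hypothesis on some $\Phi(T^{i})$ is precisely what excludes the degenerate behaviour of Proposition 5, since (as remarked before Theorem 9) it forces all factors of a product of Drinfeld modules to share one common rank, and thereby prevents the conjugate torsion points from collapsing to too small a height.

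The main obstacle I expect is the counting step. There is no o-minimal structure available over $\mathcal{C}$, so the Pila--Wilkie input cannot be imported verbatim; one must instead develop a genuinely non-archimedean, rigid-analytic point-counting estimate for $\ov{e}^{-1}(X)$ and pair it with a Galois lower bound strong enough for a general abelian uniformizable $T$-module rather than just a power of a single Drinfeld module. Controlling the transcendence of $\ov{e}$ sharply enough to separate the algebraic (sub-$T^{j(\mathcal{A})}$-module) locus of $Y$ from its truly transcendental part is, in my view, where the real difficulty lies.
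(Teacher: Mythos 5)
The statement you are addressing is Conjecture 3 of the paper: it is left open there, with no proof offered. The author only sketches (after Conjecture 1) a Pila--Zannier-style strategy for the much weaker torsion-only statement and refers to \cite{D} for details, so there is no ``paper's proof'' to compare yours against. Your text is in the same spirit, but it is a research programme rather than a proof, and you concede as much: the two pillars of the argument --- a non-archimedean, rigid-analytic analogue of the Pila--Wilkie counting theorem for $\ov{e}^{-1}(X)$, and a Galois lower bound for the orbit of an $a(T)$-torsion point of a general abelian uniformizable $T$-module --- are asserted as things you ``would seek'' and ``would prove,'' not established. Without either of them the dichotomy at the core of your argument does not exist. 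A further concrete gap: Theorem 8 (Thiery) is stated only for powers of a single Drinfeld module, and Proposition 4 likewise; the step where you convert a positive-dimensional $A$-rational piece of $Y$ into a sub-$T^{j(\mathcal{A})}$-module of a \emph{general} $\mathcal{A}$ satisfying the invertible-leading-coefficient hypothesis invokes ``the extension of Thiery's correspondence,'' which is not available in the paper and is itself a nontrivial open question.

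The most serious structural problem is your treatment of the Mordell--Lang component. Even in the classical setting of Theorems 1 and 3, the intersection of $X$ with the divisible hull $\ov{\Gamma}$ of a finitely generated group is not accessible by the Pila--Zannier counting method, which handles only the torsion (Manin--Mumford) case; Faltings' and Hindry's proofs rely on entirely different machinery. Your proposal to lift generators of $\Gamma$ to ``rational directions with controlled height'' in $Lie(\mathcal{A})$ and run ``the same counting-versus-Galois comparison'' has no analogue that works even over number fields: points of $\ov{\Gamma}$ of large order need not have large Galois orbits (they need not be torsion at all), so the lower bound side of your dichotomy collapses. This is precisely why the unified statement is posed as a conjecture rather than derived from the torsion case. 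You should separate clearly what your strategy could plausibly yield (a conditional attack on Conjectures 1 and 2, modulo the counting and Galois inputs) from the full Conjecture 3, which it does not reach.
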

Our Conjecture 3 would imply in particular (again by Proposition 4) D. Ghioca's one (see \cite{Ghioca}) for powers of Drinfeld modules (proved in the finite characteristic case by T. Scanlon (see \cite{Scanlon'}) and under different technical restriction by D. Ghioca in the same paper):
%A general Mordell-Lang conjecture has been proposed, on the other hand, by D. Ghioca in \cite{Ghioca} for powers of Drinfeld modules, also proved in the same work for the finite characteristic case. Such a conjecture is the following one. 
\begin{conj}
Let $\mathbb{D}$ be a Drinfeld module. If $\Gamma$ is a finitely generated submodule of $\mathbb{G}_{a}^{m}(\ov{k})$ for some $m>0$ and if $X$ is an algebraic subvariety of $\mathbb{G}_{a}^{m}$, then there are finitely many sub-$T-$modules $\mathcal{B}_{1}, ..., \mathcal{B}_{s}$ of $\mathbb{D}^{m}$ and elements $\ov{\gamma}_{1}, ..., \ov{\gamma}_{s}\in \mathbb{G}_{a}^{m}(\ov{k})$ such that:\[X(\ov{k})\cap \Gamma=\bigcup_{1\leq i\leq s}(\ov{\gamma}_{i}+\mathcal{B}_{i}(\ov{k})\cap \Gamma).\]
\end{conj}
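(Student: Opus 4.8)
The plan is to deduce the statement from Conjecture 3, specialised to $\mathcal{A}=\mathbb{D}^{m}$, and then to reconcile the two discrepancies between the formulations: Conjecture 3 describes $X\cap\ov{\Gamma}$ rather than $X\cap\Gamma$, and it produces translates of sub-$B-$modules rather than of sub-$T-$modules.

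First I would verify that $\mathbb{D}^{m}=(\mathbb{G}_{a}^{m},\Phi_{m})$ satisfies the hypotheses of Conjecture 3. It is uniformizable, being a power of a Drinfeld module, as already recorded in the discussion preceding Formulation 1. Since $\Phi_{m}(T)=\Phi(T)I_{m}$ acts diagonally, its leading coefficient matrix is $a_{d}I_{m}$, where $a_{d}\neq 0$ is the leading coefficient of $\mathbb{D}$; this matrix is invertible, so the hypothesis of Conjecture 3 holds already with $i=1$. Conjecture 3 then furnishes finitely many torsion subvarieties $\ov{\gamma}_{i}+\mathcal{B}_{i}$, with each $\ov{\gamma}_{i}\in\mathcal{A}_{tors.}$ and each $\mathcal{B}_{i}$ a sub-$B_{i}-$module for some subring $B_{i}\subseteq A$, such that $X\cap\ov{\Gamma}=\bigcup_{i}(\ov{\gamma}_{i}+\mathcal{B}_{i}\cap\ov{\Gamma})$.

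Next I would intersect this decomposition with $\Gamma\subseteq\ov{\Gamma}$, so that $X\cap\Gamma=\bigcup_{i}\big((\ov{\gamma}_{i}+\mathcal{B}_{i}\cap\ov{\Gamma})\cap\Gamma\big)$. Discarding the indices for which this intersection is empty, and choosing for each surviving index a point $\ov{\delta}_{i}\in\Gamma$ inside the coset, one rewrites $\ov{\gamma}_{i}+(\mathcal{B}_{i}\cap\ov{\Gamma})$ as $\ov{\delta}_{i}+(\mathcal{B}_{i}\cap\ov{\Gamma})$, since $\ov{\delta}_{i}-\ov{\gamma}_{i}\in\mathcal{B}_{i}\cap\ov{\Gamma}$. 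Because $\ov{x}-\ov{\delta}_{i}\in\Gamma\subseteq\ov{\Gamma}$ whenever $\ov{x}\in\Gamma$, intersecting with $\Gamma$ collapses this coset to $\ov{\delta}_{i}+(\mathcal{B}_{i}\cap\Gamma)$. This simultaneously replaces $\ov{\Gamma}$ by $\Gamma$ and the torsion representatives $\ov{\gamma}_{i}$ by genuine elements $\ov{\delta}_{i}\in\Gamma\subseteq\mathbb{G}_{a}^{m}(\ov{k})$, as demanded.

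Finally, to upgrade the sub-$B_{i}-$modules to sub-$T-$modules, I would invoke Proposition 4, according to which $j(\mathbb{D}^{m})=1$: every sub-$B-$module of a power of a Drinfeld module is automatically a sub-$T-$module. Applying this to each $\mathcal{B}_{i}$ yields exactly the decomposition $X(\ov{k})\cap\Gamma=\bigcup_{i}(\ov{\delta}_{i}+\mathcal{B}_{i}(\ov{k})\cap\Gamma)$ asserted in the statement. The genuinely hard part of this programme is, of course, Conjecture 3 itself: the reduction just described is purely formal, whereas Conjecture 3 rests on carrying out the Pila--Zannier analysis of $\ov{e}^{-1}(X)$ sketched after Conjecture 1 for the torsion component, together with a Mordell--Lang input controlling the finitely generated part $\Gamma$, and it is there that the principal obstacle lies.
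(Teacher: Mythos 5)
Your proposal is essentially the paper's own derivation: the paper offers no proof of this statement beyond the remark that Conjecture 3 implies it via Proposition 4 (which gives $j(\mathbb{D}^{m})=1$, so sub-$B-$modules of $\mathbb{D}^{m}$ are sub-$T-$modules), and your verification of the invertible-leading-coefficient hypothesis, the passage from $\ov{\Gamma}$ to $\Gamma$, and the appeal to Proposition 4 fill in exactly that reduction. As you note yourself, this is only a conditional argument, since the statement is a conjecture and Conjecture 3 remains open; the paper claims nothing more.
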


\end{document}